\theoremstyle{plain}
\newtheorem{theorem}{Theorem}[section]
\newtheorem{prop}[theorem]{Proposition}
\newtheorem{lemma}[theorem]{Lemma}
\newtheorem{coro}[theorem]{Corollary}
\newtheorem{fact}[theorem]{Fact}
\theoremstyle{definition}
\newtheorem{remark}[theorem]{Remark}
\numberwithin{equation}{section}
\newcommand{\ii}{\ts\mathrm{i}}
\newcommand{\ts}{\hspace{0.5pt}}
\newcommand{\nts}{\hspace{-0.5pt}}
\DeclareMathOperator{\dens}{\mathrm{dens}}
\DeclareMathOperator{\card}{\mathrm{card}}
\DeclareMathOperator{\cent}{\mathrm{cent}}
\DeclareMathOperator{\Gal}{\mathrm{Gal}}
\DeclareMathOperator{\stab}{\mathrm{stab}}
\DeclareMathOperator{\norm}{\mathrm{norm}}
\DeclareMathOperator{\ord}{\mathrm{ord}}
\DeclareMathOperator{\vol}{\mathrm{vol}}
\DeclareMathOperator{\GL}{\mathrm{GL}}
\DeclareMathOperator{\No}{\mathrm{N}}
\DeclareMathOperator{\lcm}{\mathrm{lcm}}
\newcommand{\vG}{\varGamma}
\newcommand{\vL}{\varLambda}
\newcommand{\vO}{\varOmega}
\newcommand{\fa}{\mathfrak{a}}
\newcommand{\fp}{\mathfrak{p}}
\newcommand{\fz}{\mathfrak{z}}
\newcommand{\cB}{\mathcal{B}}
\newcommand{\cE}{\mathcal{E}}
\newcommand{\cG}{\mathcal{G}}
\newcommand{\cH}{\mathcal{H}}
\newcommand{\cL}{\mathcal{L}}
\newcommand{\cP}{\mathcal{P}}
\newcommand{\cR}{\mathcal{R}}
\newcommand{\cS}{\mathcal{S}}
\newcommand{\cO}{\mathcal{O}}
\newcommand{\AAA}{\mathbb{A}}
\newcommand{\FF}{\mathbb{F}}
\newcommand{\ZZ}{\mathbb{Z}\ts}
\newcommand{\RR}{\mathbb{R}\ts}
\newcommand{\CC}{\mathbb{C}\ts}
\newcommand{\NN}{\mathbb{N}}
\newcommand{\QQ}{\mathbb{Q}}
\newcommand{\XX}{\mathbb{X}}
\newcommand{\YY}{\mathbb{Y}}
\newcommand{\Aut}{\mathrm{Aut}}
\newcommand{\pram}{\cP^{}_{\nts\mathrm{ram}}}
\newcommand{\defeq}{\mathrel{\mathop:}=}
\newcommand{\exend}{\hfill $\Diamond$}
\newcommand{\myfrac}[2]{\frac{\raisebox{-2pt}{$#1$}}
      {\raisebox{0.5pt}{$#2$}}}
\newcommand{\bs}[1]{\boldsymbol{#1}}
\newcommand\smallO{
	\mathchoice
	{{\scriptstyle\mathcal{O}}}
	{{\scriptstyle\mathcal{O}}}
	{{\scriptscriptstyle\mathcal{O}}}
	{\scalebox{.7}{$\scriptscriptstyle\mathcal{O}$}}
}
\begin{document}

\title[On $k$-free numbers in cyclotomic fields]{On
  $k$-free numbers in cyclotomic fields:\\[2mm]
  entropy, symmetries and topological invariants}

\author{Michael Baake}

\address{Fakult\"{a}t f\"{u}r Mathematik,
  Universit\"{a}t Bielefeld,\newline \indent Postfach
  100131, 33501 Bielefeld, Germany}
\email{mbaake@math.uni-bielefeld.de}

\author{\'{A}lvaro Bustos}

\address{Facultad de Matem\'{a}ticas,
  Pontificia Universidad Cat\'{o}lica de Chile, \newline
  \indent Vicu\~{n}a Mackenna 4860, 7820436 Macul, Santiago, Chile}
\email{abustog@mat.uc.cl}

\author{Andreas Nickel}

\address{Institut f\"ur Theoretische Informatik, Mathematik und
  Operations Research, \newline \indent Universit\"at der Bundeswehr
  M\"unchen, \newline \indent Werner-Heisenberg-Weg~39, 85579
  Neubiberg, Germany} \email{andreas.nickel@unibw.de}

\begin{abstract}
  Point sets of number-theoretic origin, such as the visible lattice
  points or the $k$-th power free integers, have interesting geometric
  and spectral properties and give rise to topological dynamical
  systems that belong to a large class of subshifts with positive
  topological entropy. Among them are $\cB$-free systems in one
  dimension and their higher-dimensional generalisations, most
  prominently the $k$-free integers in algebraic number fields. Here,
  we extend previous work on quadratic fields to the class of
  cyclotomic fields. In particular, we discuss their entropy and
  extended symmetries, with special focus on the interplay between
  dynamical and number-theoretic notions.
\end{abstract}

\subjclass{Primary 37B10; Secondary 52C23}
\keywords{Cyclotomic $k$-free integers, entropy, symmetries, invariants}

\maketitle
\thispagestyle{empty}

\section{Introduction}\label{sec:intro}

The set $V^{}_2$ of square-free integers consists of all numbers in
$\ZZ$ that are not divisible by the square of a rational prime.  As
such, despite having gaps of arbitrary length, it has density
$6/\pi^2$ when determined with respect to centred intervals
$I_n \defeq [-n,n]$ as $n\to\infty$, which is called \emph{tied
  density} in \cite{BMP}. It is also the upper density of $V^{}_2$ as
we shall detail below. Moreover, $V^{}_{2}$ has pure-point diffraction
in the sense of measure theory as follows. The Dirac comb
\[
     \delta^{}_{V_2} \, \defeq \sum_{x\in V_2} \delta_x \ts ,
\]
where $\delta_x$ denotes the normalised Dirac measure at $x$, has a
unique autocorrelation with respect to the averaging sequence
$(I_n)^{}_{n\in\NN}$, called $\gamma^{}_{2}$. The latter is positive
definite and thus Fourier transformable, and the transform
$\widehat{\ts\gamma^{}_{2}\ts}$ is a positive pure-point measure that
is supported on the elements of $\QQ$ with cube-free denominator; see
\cite{BMP} for a proof as well as \cite{TAO,KRS} for further
background.  Analogous statements are true for the set $V^{}_{k}$ of
$k$-th power free integers, for $k\geqslant 2$, which we simply call
$k$-free integers from now on.

Likewise, the set $V\subset \ZZ^2$ of visible points consists of all
elements of $\ZZ^2$ with coprime coordinates; see the front cover of
\cite{Apo} for an illustration. Its (tied) density is once again
$6/\pi^2$, this time calculated for centred squares of growing area,
such as $[-n,n]^2$ with $n\in\NN$. Once again, it is also the maximal
upper density of $V$ with respect to any averaging sequence.  The set
$V$ has holes of arbitrary size that even repeat lattice-periodically,
but once again pure-point diffraction relative to the above averaging
sequence. Here, the diffraction measure is supported on the points of
$\QQ^2$ with square-free denominator \cite{BMP,TAO}. Clearly, one can
define the analogous object for $\ZZ^d$ and any $d\geqslant 2$, which
gives a point set of density $1/\zeta(d)$, where $\zeta$ denotes
Riemann's zeta function.

Both cases provide uniformly discrete point sets of number-theoretic
origin in Euclidean space. They give rise to interesting topological
dynamical systems that can support many invariant measures. In the
case of $V^{}_{2}$, one defines the space, or (discrete) \emph{hull},
\[
    \XX^{}_{2} \, \defeq \, \overline{ \{ t + V^{}_{2} : t \in \ZZ \} }
\]
where $t + V^{}_{2}$ is the translation of $V^{}_{2}$ by $t$ and the
bar denotes the orbit closure in the local topology. Here, two subsets
of $\ZZ$ are $(1/n)$-close if they agree on the interval $I_n$. This
topology agrees with the product topology of $\{0,1\}^{\ZZ}$ if one
identifies a subset of $\ZZ$ with an element of this product space in
the canonical way, that is, via its characteristic function (which
is the square of the M\"{o}bius function in this example). It is
then clear that $\XX^{}_{2}$ is compact, though it has a complicated
structure. In particular, it contains the empty set (or the all-$0$
configuration), which is the only minimal subset of $\XX_2$.

Since $\ZZ$ acts continuously on $\XX_{2}$ via translation, the pair
$(\XX_{2}, \ZZ)$ constitutes a \emph{topological dynamical system}, or
TDS for short.  It is clear that it can support many different
invariant probability measures.  Among them are the patch frequency
measure $\mu_{_\text{M}}$ with respect to the averaging sequence
$(I_n)^{}_{n\in\NN}$, which is also known as the Mirsky measure
\cite{BH,BBHLN,KRS}, and the Dirac measure that is concentrated on the
empty set, denoted by $\delta^{}_{\varnothing}$. The latter is a patch
frequency measure, too, this time with respect to an averaging
sequence that `chases' holes of growing size.  Both measures have a
natural interpretation in terms of weak model sets (in the sense of
Meyer \cite{Meyer,Schreiber}) of extremal density \cite{TAO,BHS,RS}, but
many other measures exist. As they do not have an interpretation
as a patch frequency measure for $V_2$ with respect to a nested
averaging sequence around the origin, they are less natural from the
viewpoint of diffraction. There are of course other measures for
which we would get pure-point spectrum (by selecting subsets that
are (mean) Besicovitch almost periodic in the sense of \cite{LSS},
such as the $k$-free integers with $k>2$), but we prefer to consider
them separately. The situation for the visible lattice points is
completely analogous \cite{BH,BBHLN}.

Once an invariant measure is specified, say $\mu$, we get a
measure-theoretic dynamical system, $(\XX_2, \ZZ, \mu)$, and may then
use the notion of the \emph{dynamical spectrum} via the spectrum of
the Koopman operator induced on the Hilbert space
$H=L^{2} (\XX_2, \mu)$.  The spectrum is pure-point when the
eigenfunctions of the Koopman operator form a basis of $H$.  As
follows from \cite{BL}, the two notions of pure pointedness are
equivalent, so the set $V^{}_{2}$ has pure-point diffraction if and
only if $(\XX_2, \ZZ, \mu)$ has pure-point dynamical spectrum, where
both statements refer to the same averaging sequence. This point of
view relates the harmonic analysis of the point set $V^{}_{2}$ with
the dynamical systems approach to it; see also \cite{KRS}.
Interestingly, the set $V^{}_{2}$ has positive patch counting entropy,
namely $6 \ts \log (2)/\pi^2$, while the Kolmogorov--Sinai entropy of
the Mirsky measure $\mu_{_\text{M}}$ vanishes, as it does for the
visible lattice points \cite{PH}.

This raises an obvious question about the nature of the measure of
maximal entropy, as studied in \cite{Peck}; see also \cite{KLW}.  It
is known that all ergodic measures are joinings of the Mirsky measure
with another one, and that the measure of maximal entropy,
$\mu^{}_{\max}$, is even a direct product with a binary Bernoulli
measure (for $p=\frac{1}{2}$); see \cite{KPL} for the general
situation. Also, the eigenfunctions essentially do not change, but
only span a subspace of the full Hilbert space
$L^{2} (\XX_2, \mu^{}_{\max})$, thus admitting a countable number of
Lebesgue spectral measures in addition. In view of this situation,
which holds for other hereditary cases as well, it suffices to discuss
the systems for the Mirsky measure, which has the advantage that all
means are taken with respect to the same natural averaging sequence.

\smallskip

Two obvious lines of generalisation depart from here. In one
dimension, one can view $\XX^{}_{2}$ as a $\cB$-free system where
$\cB$ consists of the squares of all rational primes. Here, the tied
density of $V^{}_{2}$ is a very natural quantity in the following
sense. If we only demand square-freeness with respect to the first $m$
primes, we get a periodic point set $V^{(m)}_{2}$ whose density then
exists uniformly and is a strict upper bound for the (upper) density
of $V^{}_{2}$. In the limit as $m\to\infty$, one recovers the value
$6/\pi^2$, which is thus the largest possible value of the density of
$V^{}_{2}$ with respect to any averaging sequence.  It is
attained along $(I_n)^{}_{n\in\NN}$. This approach also
shows why $V^{}_{2}$ is limit-periodic in the mean, in the sense
of mean Besicovitch almost periodicity \cite{LSS}, via a filtration
limit of lattice-periodic point sets, giving an extra
interpretation of its pure-point diffraction.  Now, one could replace
the forbidden set $\cB$ by other infinite sets, which can change a
lot. This has a long history, and has recently attracted interesting
investigations; see \cite{Abda,DKKL,KKL,KRS} and references therein.

On the other hand, the set $V$ demonstrates that there are natural
higher-dimensional generalisations in the lattice sense, some of which
have been studied in \cite{CV,BH,BBHLN,BBN}. They can also be
interpreted as $\cB$-free systems in higher dimensions, though the
primary focus until now was on sets of powers of `primes' (in the
sense of prime lattices emerging from prime ideals in number
fields). In the longer run, one should continue the work of
\cite{BBHLN} and  develop the full analogue
of $\cB$-free systems in higher dimensions, but various new and
unexpected phenomena already show up in the simpler case of hereditary
$\cB$-free systems, which includes the $k$-free integers in algebraic
number fields. Fortunately, the connection with weak model sets of 
maximal density gives some extra insight into the geometry and the 
spectrum of these systems \cite{TAO,BHS,RS,Keller}.

One of the obvious questions is whether two given systems of this
kind, say $k$-free integers in a general algebraic number field and
$\ell$-free integers in the same or in another one, are topologically
conjugate, or whether one can be a factor of the other. One would
expect this to be the big exception, but tools are needed to settle
this. One well-known invariant is the \emph{topological entropy},
which can be used to establish topological inequivalence and to rule
out the factor property in one direction. This was studied in some
detail for the $k$-free integers in quadratic number fields in
\cite{BBN}, and will be continued here for cyclotomic fields. For
hereditary systems of $k$-free integers, which are then also weak
model sets of maximal density, the topological entropy is closely
connected with the value of the Dedekind zeta function of the number
field at $k$, which is a bit surprising.

Another class of invariants emerges from the symmetries and extended
symmetries of the shift spaces $\XX$, as recently introduced in
\cite{BRY} and applied to the case at hand in \cite{BBHLN,BBN} for
quadratic number fields. Once a TDS $(\XX, \ZZ^d)$ is given, one can
define \cite{KS,BRY,Baa,Bustos}
\[
  \cS (\XX) \,  \defeq \, \cent^{}_{\Aut (\XX)} (\ZZ^d)
  \quad \text{and} \quad
  \cR (\XX) \,  \defeq \, \norm^{}_{\Aut (\XX)} (\ZZ^d) \ts ,
\]
where $\Aut (\XX)$ is the group of self-homeomorphisms of $\XX$, which is
to say that we consider the automorphism group in the sense of Smale.
Clearly, these two groups (up to isomorphism) are topological
invariants as well \cite{KS,KKS}. While they do not compose a
particularly strong invariant, they are of independent interest
because they capture some intrinsic symmetries of the systems. For the
visible lattice points of $\ZZ^d$, with $d\geqslant 2$, and its hull,
$\XX$, one finds
\[
    \cS (\XX) \, = \, \ZZ^d \quad \text{and} \quad
    \cR (\XX) \, = \, \ZZ^d \rtimes \GL (d, \ZZ) \ts ,
\]
which shows the maximal extension for the case that the centraliser is
trivial; compare \cite{BBHLN}, which is an extension of \cite{Mentzen}, or
the alternative approach by \cite{Keller}. The key feature here is
that $\XX$ is hereditary; see \cite{BBHLN} for the general
argument. Similarly, for the $k$-free integers in an arbitrary
quadratic number field $K$ with ring of integers $\cO$, one always
gets $\cS = \ZZ^2$ and $\cR = \cS \rtimes \cE$ with the extension
group $\cE \simeq \cO^{\times} \rtimes \Gal (K/\QQ)$, as was recently
proved in \cite{BBN}. Here, we extend this result to the case of
arbitrary cyclotomic fields, which are still Galois extensions of
$\QQ$.

It turns out that, in all cases so far, the final result is
structurally the same, which suggests that it might hold more
generally; see \cite{Fabian} for recent progress. It is interesting 
that we get trivial centralisers in so
many examples with positive topological entropy. This shows that this
phenomenon is not restricted to low complexity shifts \cite{Don}, as
was already noticed in \cite{BRY}.

\smallskip

The paper is organised as follows. In Section~\ref{sec:prelim}, we
begin with some background material, where we also recall our notions
and the construction of the number-theoretic dynamical systems. This
approach follows the classic philosophy of \cite{Fuerst} to study
number-theoretic questions in a dynamical setting. Here, we construct
the shift spaces of interest as the orbit closure of the $k$-free
points under the lattice translation action in a Minkowski
embedding. For easier compatibility with symbolic dynamics, we choose
a Cartesian embedding version.

In Section~\ref{sec:entropy}, we determine the topological entropy and
derive some immediate consequences. This is followed by a brief
recollection and discussion of the spectral properties of the $k$-free
dynamical systems. They have pure-point dynamical spectrum, but
trivial topological point spectrum (Theorem~\ref{thm:spectrum}).

In a brief intermezzo, in Section~\ref{sec:inter}, we recall some
number-theoretic notions and results for cyclotomic fields. The
formulation is tailored to the determination of the stabiliser of the
$k$-free integers in Section~\ref{sec:stab}. This is the technically
most demanding part, culminating in Theorem~\ref{thm:stab}. Then, in
Section~\ref{sec:groups}, we use this to determine the topological
centraliser and normaliser of the $k$-free dynamical systems of
cyclotomic integers. This is followed by some comments on possible
generalisations and other topological invariants of interest.

\section{Preliminaries and known results}\label{sec:prelim}

Here, we are interested in the cyclotomic fields, that is, in
$\QQ(\xi^{}_n)$ with $\xi^{}_n$ a primitive $n$-th root of unity for
some integer $n>2$ with $n \not\equiv 2 \bmod 4$.  This convention is
chosen because $\QQ (\xi^{}_{m}) = \QQ (\xi^{}_{2m})$ holds for $m$
odd, so it avoids double coverage. For general background on
cyclotomic fields, we refer to \cite{W}. The degree of the field
extension $\QQ(\xi^{}_{n})/\QQ$ is $d=\varphi(n)$, where $\varphi$ is
Euler's totient function. From here on, we use $C_n$ and $D_n$ to
denote the cyclic and the dihedral groups (of order $n$ and $2n$,
respectively), and $C_{\infty}$ and $D_{\infty}$ for their infinite
counterparts.

The ring of integers in $K=\QQ (\xi^{}_{n})$ is
$\cO_n = \ZZ[\xi^{}_{n}]$.  A rational prime $p$ is ramified in
$\cO_n$ if and only if $p \ts\ts|\ts\ts n$.  We write
$U^{}_{n} = \{ \pm \ts \xi^{\ell}_{n} : \ell \in \ZZ \} \simeq
C^{}_{\lcm (2,n)}$ for the group of roots of unity in $K$.  The unit
group $\cO_n^{\times}$ of $\cO_n$ contains the subgroup
\begin{equation}\label{eq:units}
   U^{}_{n} \times
  \bigl( \ZZ [\xi^{}_{n} + {\xi}^{-1}_{n} ]^{\times}
  \nts / \{ \pm \ts 1 \} \bigr),
\end{equation}
where $\ZZ [\xi^{}_{n} + {\xi}^{-1}_{n} ]$ is the ring of integers in
the maximal real subfield $\QQ (\xi^{}_{n} + {\xi}^{-1}_{n} )$ of $K$.
Note that the group
$\ZZ [\xi^{}_{n} + {\xi}^{-1}_{n} ]^{\times}$ is the direct product of
$\{ \pm \ts 1 \}$ and a torsion-free subgroup. We may thus view the
quotient $\ZZ [\xi^{}_{n} + {\xi}^{-1}_{n} ]^{\times} \nts / \{ \pm \ts 1 \}$
as a subgroup of $\ZZ [\xi^{}_{n} + {\xi}^{-1}_{n} ]^{\times}$ and
thus of $\cO_n^{\times}$. Then, the group \eqref{eq:units} is a
subgroup of $\cO^{\times}_{n}$, which agrees with it if $n$ is a
prime power and has index $2$ in it when $n$ is composite; see
\cite[Prop.~2.16, Thm.~4.12 and Cor.~4.13]{W}. In the latter case, one
can use $z=1-\xi^{}_{n}$ as the additional generator
\cite[Prop.~2.8]{W}.

For $n=3$ and $n=4$, the ring $\cO_n$ is a lattice in
$\CC \simeq \RR^2$, while $\cO_n $ is a dense point set of $\CC$ for
all remaining cases.  However, it becomes a lattice in $\RR^d$ under
the standard Minkowski embedding, with $d=\varphi(n)$. We shall use a
variant of this where $\cO_n$ is directly represented as $\ZZ^d$ to
facilitate the connection with symbolic dynamics later on. We call
this the \emph{Cartesian} version of Minkowski's embedding, which will
be described in detail below in Eq.~\eqref{eq:Cartesian}.

The Galois group $\Gal (K/\QQ)$ of $K$ over $\QQ$ equals the
automorphism group of the field $K$, denoted by $\Aut^{}_{\QQ} (K)$,
and one has
\begin{equation}\label{eq:cyclotomic_Galois_group}
      \Gal (K/\QQ) \, \simeq \, (\ZZ / n \ZZ)^{\times},
\end{equation} 
where $r \bmod n$ corresponds to the automorphism $\sigma^{}_{r}$
which maps $\xi^{}_{n}$ to $\xi^{r}_{n}$.  If
$n = \prod_{p\mid n} p^{a_p}$ is the prime factorisation of $n$, we
have the decomposition
\begin{equation}\label{eq:cyclotomic_Galois_group-2}
    (\ZZ / n \ZZ)^{\times} \, \simeq \,
    \prod_{p\mid n} \bigl( \ZZ / p^{a_p} \ZZ \bigr)^{\times}
\end{equation}
as a consequence of the \emph{Chinese remainder theorem}
(CRT). Now, given $n$, let
$k\geqslant 2$ be a fixed integer and let $V=V^{}_{\nts k}$ denote the
set of $k$-free integers in $\cO_n$. Here, an element $x\in\cO_n$ is
called \emph{$k$-free} if the principal ideal $(x) = x\ts \cO_n$ is
not divisible by the $k$-th power of any prime ideal in $\cO_n$. Since
the class number of a cyclotomic field is generally not $1$, except in
finitely many cases, compare \cite[Thm.~11.1]{W}, we need the
ideal-theoretic formulation.

Starting from the Cartesian embedding $V^{\prime}_{k}$ of
$V^{}_{\nts k}$, the group $\cG = \ZZ^d$ with $d=\varphi (n)$ acts on
$V'_{k}$ by translation, and the orbit closure under the local
topology gives a compact space, denoted by $\XX_k$, with a continuous
action of $\cG$ on it. So, $(\XX_k, \cG)$ is a TDS. Here, $\XX_k$ can
canonically be identified with a closed subspace of
$\{ 0,1 \}^{\ZZ^d}$, and thus be considered as a subshift. In this
sense, $(\XX_k, \cG)$ belongs to a natural class of objects from
symbolic dynamics, though its general study is still in its infancy.

If $\Aut (\XX_k )$ denotes the group of self-homeomorphisms of $\XX_k$
without further restrictions, two natural groups to consider are the
\emph{centraliser} and the \emph{normaliser} of $\cG$ in
$\Aut (\XX_k )$,
\begin{equation}\label{eq:def-cent-norm}
\begin{split}
    \cS (\XX) \, & = \, \cent^{}_{\Aut (\XX)} (\cG) \, \defeq \,
    \{ H \in \Aut (\XX_k ) : G H = H G \text{ for all } G \in \cG \}
    \quad\text{and} \\
    \cR (\XX) \, & = \, \norm^{}_{\Aut (\XX)} (\cG) \, \defeq \,
    \{ H \in \Aut (\XX_k ) : H \cG H^{-1} = \cG \} \ts .
\end{split}    
\end{equation} 
Both are topological invariants in the sense that topologically
conjugate dynamical systems have the same centraliser and normaliser
(up to group isomorphism).  In this context, we are also interested in
the \emph{stabiliser} of $V_k$, denoted by $\stab (V_k)$, which is the
monoid that consists of all $\ZZ$-linear bijections
$A \! : \, \cO_n \xrightarrow{\quad} \cO_n$ with
$A(V_k ) \subseteq V_k $.

The Gauss and the Eisenstein numbers are the two cyclotomic cases that
are also quadratic fields, so $\cG = \ZZ^2$ in both cases. Here, the
central results are known \cite{BBHLN,BBN} as follows.

\begin{fact}\label{fact:Gauss-Eisen}
  Let\/ $(\XX_{\mathrm{G}},\ZZ^2)$ denote the TDS induced by the\/
  $k$-free Gaussian integers, and\/ $\zeta^{}_{\QQ (\ii)}$ the
  Dedekind zeta function of\/ $\QQ (\ii)$. The topological entropy of
  the TDS is given by\/ $\log (2)/\zeta^{}_{\QQ (\ii)} (k)$. Further,
  the centraliser and the normaliser of the shift action are
\[
  \cS (\XX_{\mathrm{G}}) \, = \, \ZZ^2  \quad \text{and} \quad
  \cR (\XX_{\mathrm{G}}) \, = \, \cS (\XX_{\mathrm{G}}) \rtimes D_4 \ts ,
\]
where\/ $D_4$ is the symmetry group of the square and a maximal finite
subgroup of\/ $\GL (2,\ZZ)$.

Likewise, the entropy of the TDS\/ $(\XX_{\mathrm{E}},\ZZ^2)$ induced
by the\/ $k$-free Eisenstein integers via their Cartesian
representation is\/ $\log (2)/\zeta^{}_{\QQ (\varrho)} (k)$, where\/
$\varrho = \frac{1}{2}(-1 + \ii \sqrt{3}\,)$, together with
\[
  \cS (\XX_{\mathrm{E}}) \, = \, \ZZ^2  \quad \text{and} \quad
  \cR (\XX_{\mathrm{E}}) \, = \, \cS (\XX_{\mathrm{E}}) \rtimes \cE \ts ,
\]
this time with the maximal finite subgroup\/ $\cE\simeq D_6$ of\/
$\GL(2,\ZZ)$, where\/ $D_6$ is the symmetry group of the regular
hexagon. \qed
\end{fact}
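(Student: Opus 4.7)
The plan divides into two largely independent pieces: the entropy computation and the determination of $\cS$ and $\cR$. I would handle them in that order, since the group-theoretic statements rest on linear-algebraic facts about the underlying ring $\cO$ rather than on the entropy machinery.

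For the topological entropy, the starting point is that both $\XX_{\mathrm{G}}$ and $\XX_{\mathrm{E}}$ are hereditary $\cB$-free subshifts, with $\cB = \{ \fp^k : \fp\subset \cO \text{ a prime ideal}\}$ in the respective ring of integers. For any hereditary subshift over $\{0,1\}$ whose hull contains a weak model set of maximal density $\varrho^{}_{\max}$, the topological entropy equals $\log(2)\cdot \varrho^{}_{\max}$: the upper bound comes from counting patches as arbitrary $0/1$-fillings of the positions occupied by $1$s in a maximal configuration (by hereditariness every such filling is admissible), and the matching lower bound is immediate. The density of the $k$-free integers in $\cO$ is then an Euler product: using independence across distinct prime ideals (via the CRT) and the fact that the density of elements divisible by $\fp^k$ is $\No(\fp)^{-k}$, inclusion-exclusion gives
\[
   \varrho^{}_{\max} \, = \, \prod_{\fp}\bigl(1 - \No(\fp)^{-k}\bigr)
   \, = \, 1/\zeta^{}_{K}(k) \ts,
\]
which yields both claimed formulas after multiplication by $\log(2)$.

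For the centralizer, the plan is to show $\cS(\XX) = \cG$. Any $H\in \cS(\XX)$ is a sliding block map commuting with all shifts and is thus determined by a local rule. Exploiting hereditariness and the positive density of $V_k$, I would argue that flipping a single carefully chosen $1$ to a $0$ in a maximal-density configuration keeps us in $\XX$; composing this perturbation with $H$ on both sides shows the local rule cannot depend non-trivially on any coordinate, forcing $H$ to be a shift. The indispensable input is a combinatorial admissibility lemma for hereditary $\cB$-free systems, which for $\QQ(\ii)$ and $\QQ(\varrho)$ is supplied by the existence of infinitely many split primes with pairwise coprime norms.

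For the normalizer, I would observe that every coset representative of $\cR(\XX)/\cS(\XX)$ induces by conjugation an element of $\GL(2,\ZZ)$, which must preserve $V_k$, i.e.\ define an element of $\stab(V_k)$. Any $\ZZ$-linear bijection of $\cO$ preserving $V_k$ must permute prime ideals while preserving their exponents in principal factorisations, and hence is, up to multiplication by a unit, a ring automorphism of $\cO$. The ring automorphisms are exactly $\Gal(K/\QQ) \simeq C_2$ in both cases, while $\cO^{\times}\simeq C_4$ for $\ZZ[\ii]$ and $\cO^{\times}\simeq C_6$ for $\ZZ[\varrho]$. The semidirect product $\cO^{\times}\rtimes \Gal(K/\QQ)$ then gives $D_4$ and $D_6$ respectively, matching the maximal finite subgroups of $\GL(2,\ZZ)$. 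The main obstacle, in my view, is the trivial-centralizer step: positive entropy a priori permits a huge $\Aut(\XX)$, and one genuinely needs the number-theoretic ingredient of many split primes to force $\cS$ to be as small as the shift group itself.
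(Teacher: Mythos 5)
Note first that the paper does not prove this Fact at all: it is quoted as a known result from \cite{BBHLN,BBN} (and it re-emerges as the special cases $n=4$ and $n=3$ of the general cyclotomic theorems proved in Sections~3--6). So the comparison is really with the cited arguments and with the paper's general machinery. Your entropy and centraliser sketches follow essentially the same route: $\log(2)/\zeta_K(k)$ via hereditariness (lower bound) plus the weak-model-set pattern-entropy bound (upper bound), and triviality of $\cS$ via Curtis--Hedlund--Lyndon together with a Mentzen-type perturbation argument. One caveat on the entropy: the direction you call easy and the direction that is genuinely delicate are interchanged. Hereditariness gives the \emph{lower} bound $\log(2)\dens(V'_k)$ almost for free; the \emph{upper} bound is the subtle part, because the number of occupied positions in a window of radius $r$ around an arbitrary centre is not a priori controlled by $\dens(V'_k)\cdot\vol(B_r)$ --- one needs that the tied density is also the supremum of upper densities over all averaging sequences, which is the content of the Huck--Richard estimates the paper invokes.

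The genuine gap is in the normaliser step, specifically the sentence ``Any $\ZZ$-linear bijection of $\cO$ preserving $V_k$ must permute prime ideals while preserving their exponents in principal factorisations, and hence is, up to multiplication by a unit, a ring automorphism of $\cO$.'' This is precisely the statement $\stab(V_k)=\cO^{\times}\rtimes\Gal(K/\QQ)$, and it is the technically hardest part of the whole story, not a consequence of anything you have set up. A $\ZZ$-linear bijection of $\cO\simeq\ZZ^2$ is an arbitrary element of $\GL(2,\ZZ)$ and has no a priori compatibility with the multiplicative or ideal structure; extracting multiplicativity from the purely combinatorial hypothesis $A(V_k)\subseteq V_k$ is the whole point. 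In \cite{BBN} this is done for quadratic fields by realising the norm level sets $\{x\in\cO: N(x)\in C\}$ as intersections of $\ZZ^2$ with conics and showing $A$ must preserve these; in the present paper it is done (for all cyclotomic fields) via the coprimality lemma for $W_k$, the upgrade $A(V_k)\subseteq V_k\Rightarrow A(W_k)=W_k$ (itself a nontrivial reduction-mod-$m$ argument), the construction of auxiliary elements $\xi^j_m-a_q^{n/m}$ with prescribed factorisation over infinitely many split primes, and Mann's theorem on vanishing sums of roots of unity. Without one of these mechanisms your argument does not close; everything after that point (identifying $C_4\rtimes C_2\simeq D_4$ and $C_6\rtimes C_2\simeq D_6$ as the maximal finite subgroups of $\GL(2,\ZZ)$) is correct but rests on the unproven claim.
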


It is one goal of this paper to generalise this result to all
cyclotomic fields, and to pave the way for a treatment of $k$-free
integers in general algebraic number fields.

\section{Spectral structure and topological
     entropy}\label{sec:entropy} 

Let $n>2$ with $n\not\equiv 2\bmod 4$ be fixed and set
$K=\QQ(\xi^{}_{n})$.  Let $k \geqslant 2$ be a fixed integer and
define, with $d=\varphi(n)$ and $\cO_n = \ZZ [ \xi^{}_n ]$ as before,
\begin{equation}\label{eq:Cartesian}
     V^{\prime}_{k} \, \defeq \, \bigl\{ (m^{}_{1}, \ldots, m^{}_{d}) :
    \textstyle{\sum_{i=1}^{d}} m^{}_{i} \ts \xi^{i-1}_{n} 
    \text{ is $k$-free in $\cO_n$} \bigr\} \, \subset \, \ZZ^d .
\end{equation}
This is a fairly standard \emph{Cartesian representation} of the
$k$-free integers in $K$ as a subset of $\ZZ^d$. It harvests the fact
that the units $\{ 1, \xi^{}_{n}, \ldots, \xi^{d-1}_{n} \}$ are
linearly independent over $\ZZ$ and can thus serve as a
  $\ZZ$-basis of $\cO_n$ by \cite[Prop.~I.10.2]{Neukirch}.  Indeed,
the embedding
\[
\begin{split}
    \cO^{}_{n} \, = \, \ZZ [\xi^{}_{n}] 
    \, & \lhook\joinrel\xrightarrow{\,\,\iota\;\,} \, \ZZ^d \\
    x = \sum_{i=1}^{d} m^{}_{i} \ts \xi^{i-1}_{n}
    \, & \longmapsto \, \iota (x) = (m^{}_{1} , \ldots , m^{}_{d} )
\end{split}    
\]
is a $\ZZ$-module isomorphism.  Via $\iota$, each ideal $\fa$ in
$\cO_n$ is bijectively mapped to $\vG_{\fa} = \iota (\fa)$, which is a
sublattice of $\ZZ^d$ of index
\[
     [\ZZ^d : \vG_{\fa}] \, = \, \No (\fa) \, = \, [\cO_n : \fa ] \ts ,
\]
where $\No$ is the absolute norm on $K$.

If $\cP$ denotes the set of prime ideals in $\cO_n$, we clearly have
\begin{equation}\label{eq:limit-periodic}
     V'_k \, = \, \ZZ^d \setminus \bigcup_{\fp\in\cP}\vG_{\fp^k} \ts .
\end{equation}
This shows that $V'_k$ can be seen as a limit of periodic point sets
by first removing from $\ZZ^d$ the lattices $\vG_{\fp^k}$ with all
$\fp$ of norm $\leqslant N$, which is still a lattice-periodic subset
of $\ZZ^d$, and then sending $N\to\infty$. Here, the limit is
initially taken in the local topology \cite{TAO}, which is a metric
topology and coincides with the product topology of the configuration
space $\{ 0,1 \}^{\ZZ^d}$ upon identifying its elements with the
corresponding subsets of $\ZZ^d$. As we shall see,
Eq.~\eqref{eq:limit-periodic} implies that the set $V'_k$
is a limit-periodic point set with a strong spectral structure.

This observation can be put into the powerful context of model sets
\cite{Meyer,Moo00,TAO} as follows. For $\fp \in \cP$, set
$H_{\fp} = \ZZ^d / \vG_{\fp^k}$, which is an Abelian group of order
$\No (\fp)^k$, and define the compact Abelian group
\[
     H \, \defeq \bigotimes_{\fp\in\cP} H_{\fp} \ts .
\]
We equip it with its natural Haar measure, normalised to $1$ and
simply written as $\vol (H) = 1$. Next, we define the
\emph{$\star$-map} from $\ZZ^d$ into $H$ by
\[
   z \in \ZZ^d \; \mapsto \;
   z^{\star} \, \defeq \, \bigl( z \bmod \vG_{\fp^k} 
   \bigr)_{\fp\in\cP} \ts .
\]
We can now rewrite the set of $k$-free points as
\begin{equation}\label{eq:V-as-star}
    V'_k \, = \, \{ z \in \ZZ^d : z^{\star} \in \vO \}
    \qquad \text{with } \, \vO \, \defeq \prod_{\fp\in\cP}
    \bigl( H_{\fp} \setminus \{ 0 \} \bigr) .
\end{equation}
Here, $\vO$ is a closed subset of $H$ of Haar measure
\[
   \vol (\vO) \, = \prod_{\fp\in\cP} 
   \frac{\No (\fp)^k - 1}{\No (\fp)^k} \, = 
   \prod_{\fp\in\cP}  \bigl( 1 - \No (\fp)^{-k} \bigr)
   \, = \, \frac{1}{  \zeta^{}_{K} (k)} \ts ,
\]
where $\zeta^{}_{K}$ denotes the Dedekind zeta function of the
cyclotomic field $K$.  For basic properties of the Dedekind zeta
function, we refer the reader to \cite[Ch.~VII, \S 5]{Neukirch}, and
for the above formula to \cite[Prop.~VII.5.2]{Neukirch} in particular.

This way, we have identified $V'_k$ as a weak model set in the cut and
project scheme
\begin{equation}\label{eq:CPS}
\renewcommand{\arraystretch}{1.2}\begin{array}{r@{}ccccc@{}l}
   \\  & \RR^d & \xleftarrow{\;\;\; \pi \;\;\; } 
   & \RR^d \nts\nts \times \nts\nts H & 
   \xrightarrow{\;\: \pi^{}_{\text{int}} \;\: } & H & \\
   & \cup & & \cup & & \cup & \hspace*{-1.5ex} 
   \raisebox{1pt}{\text{\footnotesize dense}} \\
   & \ZZ^d & \xleftarrow{\;\ts 1-1 \;\ts } &
     \cL  & \xrightarrow{ \qquad } &\pi^{}_{\text{int}} (\cL) & \\
   & \| & & & & \| & \\
   & L & \multicolumn{3}{c}{\xrightarrow{\qquad\quad\quad
    \,\,\,\star\!\! \qquad\quad\qquad}} 
   &  {L_{}}^{\star\nts}  & \\ \\
\end{array}\renewcommand{\arraystretch}{1}
\end{equation}
with the lattice $\cL = \{ (z, z^{\star}) : z \in \ZZ^d \}$; see
\cite{Moo00,TAO} for background. It is called \emph{weak} because the
set $\vO$ has no interior and thus consists of boundary only, so
$\vol (\vO) = \vol (\partial \vO)$. The analysis of weak model sets,
in contrast to regular ones, requires more care because many limits
fail to be uniform, hence require the specification of averaging
sequences \cite{BHS}.

The set $V'_k$ possesses a natural density, as defined by taking the
limit of counting its points, per unit volume in $\RR^d$, in a centred
ball of increasing radius.

\begin{fact}\label{fact:nat-dens}
  The natural density of\/ $V'_{k}$ exists, and is given by
\[
   \dens (V'_k) \, \defeq \, 
   \lim_{r\to\infty} \frac{\card \bigl( V'_k \cap B^{}_r (0) \bigr)}
   {\vol \bigl( B^{}_r (0) \bigr)} \, = \, 1/\zeta^{}_{K} (k) \, = \,
    \vol (\vO) \ts .
\]
\end{fact}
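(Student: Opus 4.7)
The plan is to combine an inclusion--exclusion argument with a lattice-point counting estimate. Starting from the representation \eqref{eq:limit-periodic}, one first considers, for any \emph{finite} set $S \subset \cP$, the truncated set
\[
   V^{(S)}_{k} \, \defeq \, \ZZ^d \setminus \bigcup_{\fp \in S} \vG_{\fp^k}.
\]
By the Chinese remainder theorem applied to the pairwise coprime ideals $\fp^k$, the intersection $\bigcap_{\fp \in S} \vG_{\fp^k}$ is a sublattice of $\ZZ^d$ of index $\prod_{\fp \in S} \No(\fp)^k$, and $V^{(S)}_{k}$ is a union of cosets. A routine inclusion--exclusion then yields that $V^{(S)}_k$ has natural density $\prod_{\fp \in S} \bigl(1 - \No(\fp)^{-k}\bigr)$, with uniform convergence of the counting ratios as $r \to \infty$ (which is automatic for lattice-periodic subsets of $\ZZ^d$). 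Since $V'_k \subseteq V^{(S)}_k$, this immediately gives
\[
  \limsup_{r\to\infty} \frac{\card\bigl(V'_k \cap B^{}_r (0)\bigr)}{\vol\bigl(B^{}_r (0)\bigr)} \, \leqslant \prod_{\fp \in S} \bigl(1 - \No(\fp)^{-k}\bigr),
\]
and letting $S$ exhaust $\cP$ together with the Euler product for $\zeta^{}_{K}$ produces the upper bound $1/\zeta^{}_{K}(k)$.

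For the matching lower bound, note that $V^{(S)}_k \setminus V'_k \subseteq \bigcup_{\fp \in \cP \setminus S} \vG_{\fp^k}$, so it suffices to estimate this tail uniformly in $r$. The Cartesian embedding $\iota$ is $\RR$-linearly equivalent to the Minkowski embedding, so AM--GM at the level of the latter yields a constant $c > 0$ with $|\iota(z)| \geqslant c \, |\No^{}_{K/\QQ}(z)|^{1/d}$ for every nonzero $z \in \cO^{}_{n}$. In particular, any nonzero point of $\vG_{\fp^k} \cap B^{}_r(0)$ corresponds to $z \in \fp^k$ with $\No(\fp)^k \leqslant |\No(z)| \leqslant C r^d$, so only primes with $\No(\fp) \leqslant (Cr^d)^{1/k}$ contribute. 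For each such $\fp$, a standard successive-minima argument on $\vG_{\fp^k}$, using that its shortest nonzero vector has length $\gtrsim \No(\fp)^{k/d}$, gives the lattice-point bound
\[
   \card\bigl(\vG_{\fp^k} \cap B^{}_r(0)\bigr) \, \leqslant \, c' \biggl( \frac{r^d}{\No(\fp)^k} + 1 \biggr).
\]
Summing over $\fp \in \cP \setminus S$ with $\No(\fp) \leqslant (Cr^d)^{1/k}$ and normalising by $\vol(B^{}_r(0)) \asymp r^d$, the excess density is bounded by $c'' \sum_{\fp \notin S} \No(\fp)^{-k}$ up to a vanishing term as $r\to\infty$. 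Since $\sum_{\fp \in \cP} \No(\fp)^{-k}$ converges for $k \geqslant 2$ (it is dominated by $\log \zeta^{}_{K}(k)$ up to lower-order contributions), this tail can be made arbitrarily small by enlarging $S$, giving $\liminf \geqslant 1/\zeta^{}_{K}(k)$.

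The hard part is the tail estimate in the second paragraph: one has to handle prime ideals of all sizes simultaneously, and the lattice-point bound deteriorates precisely for the primes of intermediate norm. The norm--Euclidean comparison, which cuts the sum off at $\No(\fp) \lesssim r^{d/k}$, and the successive-minima bound together form the technical core; the remaining steps are bookkeeping via the Euler product identity $\zeta^{}_{K}(k) = \prod_{\fp \in \cP} (1 - \No(\fp)^{-k})^{-1}$. The final identification $\dens(V'_k) = \vol(\vO)$ is then immediate from the computation of $\vol(\vO)$ already carried out just before the statement.
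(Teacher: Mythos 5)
Your argument is correct, but it takes a genuinely different route from the paper. The paper disposes of this fact in three lines by invoking a general density theorem for Dedekind domains due to Demangos and Longhi (\cite[Thm.~6.17]{DL}, with \cite[Prop.~6.14]{DL} supplying the hypothesis $\Omega(\fp_i^k)=k>1$), together with the remark that the asymptotic density there agrees with the natural density for centred balls. You instead give a self-contained proof in the classical style of \cite{BMP,PH,CV}: the upper bound via the lattice-periodic truncations $V_k^{(S)}$, CRT and the Euler product, and the lower bound via an explicit tail estimate. Your tail estimate is sound: the AM--GM comparison $|\iota(z)|\geqslant c\,|\No(z)|^{1/d}$ cuts the sum off at $\No(\fp)\lesssim r^{d/k}$ and forces all successive minima of $\vG_{\fp^k}$ to be $\asymp\No(\fp)^{k/d}$, whence the counting bound $c'(r^d/\No(\fp)^k+1)$; the $+1$ terms contribute $O(r^{d/k-d})=o(1)$ after normalisation since $k\geqslant 2$, and the main terms give the convergent tail $\sum_{\fp\notin S}\No(\fp)^{-k}$. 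This is precisely the ``tail estimate'' whose necessity the paper acknowledges (and defers to the methods of \cite{DL}); you make it explicit. What each approach buys: the citation route is shorter and rests on a framework valid for arbitrary Dedekind domains and more general families of ideals $\fa_i$, which the paper exploits again in Proposition~\ref{prop:admissible}; your route is elementary, transparent about where the geometry of numbers enters, and would let a reader verify the result without consulting \cite{DL}. Both are valid proofs of the stated fact.
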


\begin{proof} 
    This is a special case of \cite[Thm.~6.17]{DL}.  In their
    notation, take $D = \cO_n$, label the prime ideals as $\fp_i$,
    with $i \in \NN$, and set $\fa_i = \fp_i^k$. Then,
    $\Omega(\fp_i^k) = k > 1$ (as required for an application of
    \cite[Prop.~6.14]{DL}), $X = V_k$, and the `asymptotic density' of
    $V_k$ coincides with the natural density of $V'_k$, as the latter
    does not depend on which kind of balls we use, as long as they
    are centred; compare the last two sections of \cite{BMP}.
\end{proof}

This relation can easily be derived from
Eq.~\ref{eq:limit-periodic}. Note, however, that the density notion
requires some care, because the set $V'_k$ contains holes of arbitrary
size. This forces to specify the averaging sequence, and the density
cannot exist uniformly with respect to the location of the balls. With
the centres fixed as here, we speak of \emph{tied} density; see
\cite{BMP} for details.  Via Fact~\ref{fact:nat-dens}, we see that
$V'_k$ is actually a weak model set of maximal density in the sense of
\cite{BHS}, which has the consequence that $V'_k$ is a point set with
pure-point diffraction spectrum; see \cite{TAO} for the details of
this notion and \cite{BMP,PH,BH} for explicit examples.  The support
of the diffraction measure is a subgroup of $\RR^d$.  This equals the
\emph{dynamical spectrum}, which is an important invariant in the
measure-theoretic setting of the Halmos--von Neumann theorem.

The set $V'_k$ is locally finite, so it contains only finitely many
local configurations or patches of a given size. Here, a \emph{patch}
is any set of the form $V'_k \cap B^{}_{r} (z)$ for some $r>0$ and
$z\in\ZZ^d$.  More generally, intersections of $V'_k$ with a compact
subset of $\RR^d$ are called \emph{clusters}.  If $\#^{}_r$ denotes
the number of patches in $V'_k$ of radius $r$, the quotient
$\log (\#^{}_r)/\vol (B_r )$ converges as $r\to\infty$. The limit is
called the \emph{patch counting entropy} of $V'_k$ and denoted by
$h_{\text{pc}}$.

\begin{lemma}\label{lem:pc-1}
  The patch counting entropy of\/ $V'_k$ exists and is given by\/
  $h_{\mathrm{pc}} = \log (2) / \zeta^{}_{K} (k)$.
\end{lemma}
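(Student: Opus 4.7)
The plan is to prove $h_{\mathrm{pc}} = \log(2)/\zeta_K(k)$ by matching upper and lower bounds, both exploiting the periodic approximations $V^{(N)}_{k} \defeq \ZZ^d \setminus \bigcup_{\No(\fp) \leq N} \vG_{\fp^k}$ implicit in \eqref{eq:limit-periodic}, combined with Fact~\ref{fact:nat-dens} and the product structure of the window $\vO$ from \eqref{eq:V-as-star}.

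For the upper bound, the observation is that $V'_k \subseteq V^{(N)}_k$ pointwise, so every patch of $V'_k$ of radius $r$ is obtained from a patch of the lattice-periodic set $V^{(N)}_k$ of the same radius by deleting some ones. This gives
\[
  \#_r \,\leq\, p^{}_N (r)\cdot 2^{M^{}_N (r)} ,
\]
where $p^{}_N (r)$ counts patches of $V^{(N)}_k$ of radius $r$ and $M^{}_N (r) = \max_{z \in \ZZ^d} |V^{(N)}_k \cap B_r(z) \cap \ZZ^d|$. Periodicity forces $p^{}_N (r)$ to be bounded by the finite index of the associated period lattice in $\ZZ^d$, a constant depending only on $N$; a uniform lattice-equidistribution estimate then yields $M^{}_N (r) \leq (1+\varepsilon) \dens(V^{(N)}_k)\vol(B_r)$ for $r$ large. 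Dividing by $\vol(B_r)$ and letting $r \to \infty$, then $N \to \infty$ (so that $\dens(V^{(N)}_k) \searrow 1/\zeta_K(k)$), and finally $\varepsilon \to 0$, produces $h_{\mathrm{pc}} \leq \log(2)/\zeta_K(k)$.

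For the lower bound, fix large $r$ and set $F_0 \defeq V'_k \cap B_r(0) \cap \ZZ^d$, which by Fact~\ref{fact:nat-dens} satisfies $|F_0| \geq (1-\varepsilon)\vol(B_r)/\zeta_K(k)$. For each subset $P \subseteq F_0$, I aim to produce a distinct $z_P \in \ZZ^d$ whose radius-$r$ patch equals $P$, meaning that $z_P + x \in V'_k$ if and only if $x \in P$. For every $x \in (B_r \cap \ZZ^d) \setminus P$, pick a distinct prime ideal $\fp_x \subset \cO_n$ of norm so large that $\vG_{\fp_x^k}$ contains no nonzero element of $B_{2r}$, and impose $z_P \equiv -x \pmod{\vG_{\fp_x^k}}$ by CRT (legal because distinct prime powers are coprime). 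This forces $\fp_x^k \mid (z_P+x)$, so $z_P+x \notin V'_k$, while the size bound on $\No(\fp_x)$ prevents these congruences from killing any $z_P+y$ with $y \in P$, since $z_P+y \equiv y-x \not\equiv 0 \pmod{\vG_{\fp_x^k}}$. For the remaining primes $\fq$, membership of $z_P+y$ in $V'_k$ for all $y \in P$ means avoiding the residues $\{-y : y \in P\} \pmod{\vG_{\fq^k}}$. For the small primes with $\No(\fq)^k \leq |P|$, setting $z_P \equiv 0 \pmod{\vG_{\fq^k}}$ is admissible because $y \in F_0$ is $k$-free, so $y \not\equiv 0 \pmod{\vG_{\fq^k}}$. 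For large primes, the allowed fraction of residues is $1 - |P|/\No(\fq)^k$, and the infinite product
\[
  \prod_{\fq \ : \ \No(\fq)^k > |P|} \bigl( 1 - |P|/\No(\fq)^k \bigr)
\]
converges to a positive limit by absolute convergence of $\sum_{\fq} \No(\fq)^{-k}$. Hence the set of admissible $z_P$ within the prescribed CRT coset has positive density, and a valid $z_P$ exists. Distinct choices of $P$ produce distinct patches, so $\#_r \geq 2^{|F_0|}$, which gives $h_{\mathrm{pc}} \geq (1-\varepsilon)\log(2)/\zeta_K(k)$, hence the matching lower bound.

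The main obstacle is the careful CRT orchestration in the lower bound: one must partition the primes into disjoint roles (the zero-forcing $\fp_x$, the small-prime auxiliaries, and the large primes providing residual freedom) and verify that the resulting infinite-product density remains strictly positive, so that a $z_P$ realising the prescribed pattern actually exists. The upper bound, while conceptually cleaner, relies on the equidistribution estimate for the periodic approximant $V^{(N)}_k$ with an error term $o(\vol(B_r))$ that is uniform in $z$ — a classical but nontrivial ingredient.
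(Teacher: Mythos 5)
Your overall architecture matches the paper's (lower bound from heredity plus the tied density, upper bound from periodic approximants), and your upper bound is correct: lattice periodicity of $V^{(N)}_k$ bounds its number of radius-$r$ patches by the index of its period lattice, the uniform count $M^{}_N(r)=\dens(V^{(N)}_k)\vol(B_r)+O(r^{d-1})$ holds uniformly in the centre, and letting $N\to\infty$ gives $\log(2)/\zeta^{}_K(k)$. This is essentially the content the paper delegates to \cite{HR}.

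The gap is in the lower bound, at the very last step. After imposing the finitely many congruences $z^{}_P\equiv -x \pmod{\vG_{\fp_x^k}}$ and $z^{}_P\equiv 0\pmod{\vG_{\fq^k}}$ for the small primes, you must still ensure that $z^{}_P$ avoids the residues $-y \bmod \vG_{\fq^k}$, $y\in P$, for \emph{every} remaining prime ideal $\fq$, and you conclude that the set of such $z^{}_P$ has positive density ``because the infinite product of the local densities converges''. That inference is not valid as stated: convergence of $\prod_{\fq}\bigl(1-|P|/\No(\fq)^k\bigr)$ by itself says nothing about the density, or even the nonemptiness, of the set cut out by infinitely many simultaneous avoidance conditions. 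To pass from local densities to a global one you need a tail estimate: a bound, uniform in the box size, on the number of $z$ in a large box that violate the condition for \emph{some} prime with $\No(\fq)^k>R$, showing this exceptional set has upper density tending to $0$ as $R\to\infty$. That in turn requires controlling $\#\{z \in \text{box}: z\equiv -y \pmod{\vG_{\fq^k}}\}$ for each large $\fq$, which depends on the geometry (shortest vectors) of the ideal lattices $\vG_{\fq^k}$ and not merely on their index. This is precisely the delicate point the paper isolates: its proof routes the lower bound through the hereditary property of Proposition~\ref{prop:admissible}, whose proof explicitly requires ``a tail estimate along the methods of \cite{DL}'' (see also \cite{BBHLN}). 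Your construction is exactly the right skeleton --- it is in essence the proof of heredity --- but without the tail estimate the existence of $z^{}_P$, and hence the bound $\#^{}_r\geqslant 2^{|F_0|}$, is not yet established.
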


\begin{proof}
  The existence is a standard subadditivity argument on the basis of
  Fekete's lemma, in complete analogy to the treatment in \cite{PH}.
  In fact, it is clear that $\log(2)\, \delta$ is a lower bound
  whenever $\delta$ is the upper density of $V'_{k}$ with respect to
  any chosen van Hove averaging sequence. Equality then emerges 
  when we use the supremum of all such upper densities, and this is
  indeed achieved by the tied density of $V'_{k}$ with respect to
  centred balls of increasing radius as averaging sequence. 

  Next, the set $V'_k$ is \emph{hereditary} in the sense that, for
  every patch in it, any subcluster of the latter also occurs as a
  patch in $V'_k$, see Proposition~\ref{prop:admissible} below.  The
  proof of this claim is a straight-forward modification of the
  arguments used in \cite{BBHLN}.  Consequently,
  $\log (2) \dens (V'_k)$ is a \emph{lower} bound to $h_{\text{pc}}$,
  which is the claimed expression from Fact~\ref{fact:nat-dens}.

  But this value is also an \emph{upper} bound for $h_{\mathrm{pc}}$
  by the argument from \cite[Rem.~4.3 and Thm.~4.5]{HR} on the entropy
  of weak model sets, which is essentially equivalent to the
  observation that the tied density of $V'_k$ is also its upper
  density as obtained via a supremum over all possible averaging
  sequences. Consequently, we get equality.
\end{proof}

Next, we turn the point set $V'_k$ into a dynamical system by defining
\begin{equation}\label{eq:def-dyn-syst}
     \XX_k \, \defeq \, \overline{\ts\ZZ^d + V'_k} \ts ,
\end{equation}
where $\ZZ^d + V'_k$ is a shorthand for the orbit
$\{ t + V'_k : t \in \ZZ^d \}$ and the closure is taken in the local
topology mentioned earlier. Here, $\XX_k$ can either be viewed as a
collection of subsets of $\ZZ^d$ or, equivalently, as a collection of
elements of the configuration space $\{ 0,1 \}^{\ZZ^d}$, also known as
the full binary shift space; see \cite{BBHLN} for the details on this
identification.  Either way, $\XX_k$ is a compact space with a
continuous action of $\ZZ^d$ on it via translations, so
$(\ts \XX_k, \ZZ^d)$ is a TDS.

Due to the complicated structure of the set $V'_k$, it is not easy to
understand the topological space $\XX_k$.  To continue, we define
$\cB = \{ \vG_{\fp^k} : \fp \in \cP \}$ and call a set
$U\subset \ZZ^d$ \emph{admissible} for $\cB$ if, for every
$\vG_{\fp^k} \in \cB$, the set $U$ meets at most $\No (\fp)^k - 1$
cosets of $\vG_{\fp}$ in $\ZZ^d$, that is, misses at least one. The
collection of all admissible subsets of $\ZZ^d$, namely
\begin{equation}\label{eq:def-A}
    \AAA_k \, = \, \{ \vL \subset \ZZ^d : 
    \card ( \vL \bmod \vG_{\fp^k} ) \leqslant \No (\fp)^k - 1
    \text{ for all } \fp \in \cP \} \ts ,
\end{equation}
is a closed and $\ZZ^d$-invariant set and thus constitutes a
\emph{subshift}.  Clearly, $V'_k \in \AAA^{}_k$, as well as
$t + V'_k \in \AAA^{}_k$ for all $t\in\ZZ^d$, and since no residue
class can be created in a limit, we clearly have
$\XX_k \subseteq \AAA_k$. Moreover, one has the following result.

\begin{prop}\label{prop:admissible}
  Let\/ $\XX_k$ and\/ $\AAA_k$ be defined as in
  \eqref{eq:def-dyn-syst} and \eqref{eq:def-A}. Then, for any fixed\/
  $n>2$ with\/ $n\not\equiv 2 \bmod 4$ and any\/ $k \geqslant 2$, one
  has\/ $\, \XX_k = \AAA_k$.  In particular, $\XX_k$ is hereditary, so
  arbitrary subsets of elements of\/ $\XX_k$ are again elements of\/
  $\XX_k$.
\end{prop}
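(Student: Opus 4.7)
The containment $\XX_k \subseteq \AAA_k$ is essentially immediate: $V'_k$ is disjoint from $\vG_{\fp^k}$ itself for every $\fp$ by construction, so every translate $t+V'_k$ misses the coset $t+\vG_{\fp^k}$ modulo $\vG_{\fp^k}$, and admissibility with respect to each fixed $\fp$ is a closed condition in the product topology of $\{0,1\}^{\ZZ^d}$. My plan is therefore to focus on the reverse inclusion $\AAA_k \subseteq \XX_k$, adapting the CRT-based strategy of \cite{BBHLN}.

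Fix $\vL\in\AAA_k$ and a finite window $F\subset\ZZ^d$; it suffices to produce $t\in\ZZ^d$ with $(t+V'_k)\cap F = \vL\cap F$. Write $P=\vL\cap F$ and $Q=F\setminus P$. By admissibility, for each prime $\fp$ I can select $c_\fp\in\ZZ^d$ with $(c_\fp+\vG_{\fp^k})\cap\vL=\varnothing$, and in particular $P\cap(c_\fp+\vG_{\fp^k})=\varnothing$. I would then choose a threshold $N$ large enough that any two distinct elements of $F$ remain distinct modulo $\vG_{\fp^k}$ whenever $\No(\fp)>N$, and also so that $\No(\fp)^k > |F|$ in that range; this is possible because each nonzero element of $\cO_n$ lies in only finitely many $\fp^k$. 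Next, I would allocate pairwise distinct auxiliary primes $\fp_y\in\cP^{}_{>N}$, one per $y\in Q$. The Chinese remainder theorem in $\cO_n$, applied to the pairwise coprime ideals $\{\fp^k:\fp\in\cP^{}_{\leqslant N}\}\cup\{\fp_y^k:y\in Q\}$ and transported via $\iota$, then produces a coset $t_0+\vG\subset\ZZ^d$ of solutions to
\[
   t\equiv c_\fp \bmod \vG_{\fp^k}\;\;(\fp\in\cP^{}_{\leqslant N}),
   \qquad
   t\equiv y \bmod \vG_{\fp_y^k}\;\;(y\in Q).
\]
For any such $t$: if $y\in Q$ then $y-t\in\vG_{\fp_y^k}$, so $y\notin t+V'_k$; and if $x\in P$ then $x-t\notin\vG_{\fp^k}$ for each $\fp\in\cP^{}_{\leqslant N}$ (because $x\notin c_\fp+\vG_{\fp^k}$) and $x-t\notin\vG_{\fp_y^k}$ for each $y\in Q$ (because $x\not\equiv y\bmod\vG_{\fp_y^k}$ by the choice of $N$).

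The remaining, and main, obstacle is to further specify $t=t_0+v$ with $v\in\vG$ such that $x-t\notin\vG_{\fq^k}$ for every $x\in P$ and every prime $\fq$ outside the finite set $\cP^{}_{\leqslant N}\cup\{\fp_y\}_{y\in Q}$. For each such $\fq$, coprimality with the primes used so far forces $\vG+\vG_{\fq^k}=\ZZ^d$, hence $[\vG:\vG\cap\vG_{\fq^k}]=\No(\fq)^k$; so the $v\in\vG$ violating the condition for a single pair $(x,\fq)$ form a coset of relative density $\No(\fq)^{-k}$ inside $\vG$, and by a union bound the $v$-set of bad vectors for $\fq$ has relative density at most $|P|\ts\No(\fq)^{-k}<1$. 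Since $\sum_\fq\No(\fq)^{-k}$ converges (this is just $\zeta^{}_K(k)<\infty$, as in Fact~\ref{fact:nat-dens}), a standard sieve argument on $\vG$ parallel to the one giving $\dens(V'_k)=1/\zeta^{}_K(k)$ produces the lower bound $\prod_\fq(1-|P|\ts\No(\fq)^{-k})>0$ for the density of admissible $v$. Any $v$ in this good set yields the desired $t$, so $\vL\in\XX_k$. The hereditary statement then follows as a direct corollary, since any subset of an admissible configuration remains admissible.
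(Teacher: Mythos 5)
Your argument is correct and is essentially the proof the paper has in mind: the paper simply defers to \cite[Prop.~5.2]{BBHLN}, which is exactly your CRT construction (use the missed cosets $c_{\fp}$ to protect the points of $P$ at the finitely many small primes, attach one auxiliary large prime $\fp_y$ to each point of $Q$ to delete it, and then sieve the remaining primes on the solution lattice $\vG$). The only step deserving emphasis is your closing ``standard sieve argument'': convergence of $\sum_{\fq} \No(\fq)^{-k}$ alone does not yield the positive-density (or even nonemptiness) conclusion for the good set of $v$ --- one needs the uniform tail estimate of \cite{DL} controlling the primes of large norm, which is precisely the point the paper singles out in its own proof; since you invoke the same machinery that underlies Fact~\ref{fact:nat-dens}, your write-up is consistent with the paper's level of detail, but that estimate is where the real analytic content sits.
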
 

\begin{proof}
  While the relation $\XX_k \subseteq \AAA_k$ is clear by our above
  arguments, the converse is the non-trivial part of the statement.
  If follows via \cite[Prop.~5.2]{BBHLN}, which rests on an asymptotic
  density argument that ultimately holds as a consequence of
  Fact~\ref{fact:nat-dens}: The upper bound for the density of the
  required locator sets from the filtration along certain primes is
  indeed the density of the locator set, and hence positive. This
  step requires a tail estimate along the methods of \cite{DL}.
   
  Since subsets of admissible sets clearly remain admissible, $\XX_k$
  is hereditary.
\end{proof}

An important topological invariant of such a system is its
\emph{topological entropy}, $h_{\text{top}}$. Here, with
$K=\QQ (\xi^{}_{n})$ and $k$ as above, we have the following result.

\begin{prop}\label{prop:top-ent}
  The topological entropy of the TDS\/ $(\XX_k, \ZZ^d)$ agrees with
  the patch counting entropy of\/ $V'_k$, that is, one has\/
  $ \; h_{\mathrm{top}} \ts = h_{\mathrm{pc}} = \ts \log(2)/
  \zeta^{}_{K} (k)$.
\end{prop}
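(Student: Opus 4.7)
The plan is to reduce the statement to Lemma~\ref{lem:pc-1} by showing that $h_{\mathrm{top}}$ equals $h_{\mathrm{pc}}$. First, I would note that $(\XX_k, \ZZ^d)$ is a subshift of $\{0,1\}^{\ZZ^d}$ and that the sets $B^{}_{r}(0)\cap\ZZ^d$ form a F{\o}lner sequence in $\ZZ^d$, since the centred Euclidean balls are van Hove in $\RR^d$. Consequently, the topological entropy admits the standard expression
\[
   h_{\mathrm{top}} \, = \, \lim_{r\to\infty}
   \frac{\log | L^{}_{B_r}(\XX_k) |}{\vol (B^{}_{r})} \, ,
\]
where $L^{}_{B_r}(\XX_k)$ is the language of $\XX_k$ on $B^{}_{r}(0)\cap\ZZ^d$, that is, the set of patterns realised by some element of $\XX_k$. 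On the other hand, after identifying each patch $V'_k \cap B^{}_{r}(z)$ with its translate $(V'_k - z) \cap B^{}_{r}(0)$, the quantity $\#^{}_{r}$ counts precisely those patterns on $B^{}_{r}(0)\cap\ZZ^d$ that are realised by some translate of the single configuration $V'_k$.

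Next, I would establish the matching inequalities $\#^{}_{r} \leqslant |L^{}_{B_r}(\XX_k)|$ and $|L^{}_{B_r}(\XX_k)| \leqslant \#^{}_{r}$. The first is immediate, since every translate $V'_k - z$ belongs to $\XX_k$. The second uses the defining property $\XX_k = \overline{\ts\ZZ^d + V'_k}$: any $w \in L^{}_{B_r}(\XX_k)$ arises as the restriction $X \cap B^{}_{r}(0)$ of some $X \in \XX_k$, and $X$ is in turn a limit, in the local topology, of a sequence $(t_n + V'_k)^{}_{n\in\NN}$. Since the restriction map to the finite set $B^{}_{r}(0)\cap\ZZ^d$ is locally constant in that topology, one has $(t_n + V'_k) \cap B^{}_{r}(0) = w$ for all sufficiently large $n$, so $w$ is a patch of $V'_k$ in the sense above.

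Passing to the scaled logarithmic limit then yields $h_{\mathrm{top}} = h_{\mathrm{pc}}$, and combining this with Lemma~\ref{lem:pc-1} gives the claimed value $\log(2)/\zeta^{}_{K}(k)$. There is no serious obstacle here: the argument is the general observation that, for the orbit closure of a single configuration inside a full shift, the subshift language and the set of local patches of that configuration coincide patch by patch. In particular, this derivation uses only the definition of $\XX_k$ as the orbit closure of $V'_k$, and does not appeal to the identification $\XX_k = \AAA_k$ from Proposition~\ref{prop:admissible}.
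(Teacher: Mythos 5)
Your argument is correct. The central step --- that for the orbit closure of a single configuration inside a full shift, the language on a finite window coincides with the set of (translation-reduced) patches of that configuration on the same window --- is established cleanly by your two inequalities, and the passage to the limit is legitimate because the centred balls intersected with $\ZZ^d$ form a F{\o}lner sequence, along which the topological entropy of a subshift can be computed. This is, however, not the route taken in the paper's displayed proof, which obtains $h_{\mathrm{top}} = h_{\mathrm{pc}}$ by citing results on pattern entropy of Delone sets and weak model sets (\cite{BLR} together with \cite{HR}), using only that $V'_k$ has finite local complexity. Your derivation is instead a self-contained version of the alternative the paper mentions in passing right after the proof, namely that $\XX_k$ is a binary shift space in which $V'_k$ has a dense orbit, with reference to \cite{CC}. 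What your approach buys is an elementary, citation-free argument that, as you correctly observe, needs neither the Delone-set machinery nor the identification $\XX_k=\AAA_k$ of Proposition~\ref{prop:admissible}; what the paper's citations buy is brevity and a formulation that applies uniformly to point sets beyond the lattice-embedded setting. In both cases the numerical value then comes from Lemma~\ref{lem:pc-1} exactly as you say.
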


\begin{proof}
  The set $V'_k$ has finite local complexity (meaning that the
  collection of intersections $(t+V'_k) \cap C$ with $t\in\ZZ^d$ and
  any given compact $C \subset\RR^d$ is a finite set) and is
  identified with a configuration in the space $\{ 0,1
  \}^{\ZZ^d}$. Consequently, by \cite[Thm.~1 and Rem.~2]{BLR} together
  with \cite[Rem.~4.3 and 4.6]{HR}, we get
  $h_{\mathrm{top}} \ts = h_{\mathrm{pc}} $, and the claim follows
  from Lemma~\ref{lem:pc-1}.
\end{proof}

Let us mention in passing that
$ h_{\mathrm{top}} \ts = h_{\mathrm{pc}}$ can also be seen from
\cite[Sec.~5.7 and Exc.~5.22]{CC}, because $\XX_k$ is a binary shift
space in which $V_k^{\prime}$, viewed as a configuration, has a dense
orbit.

Recall that a TDS $(\YY, \cG)$ is a \emph{factor} of $(\XX,\cG)$ when
a continuous surjection $\varphi \colon \XX \longrightarrow \YY$
exists such that the diagram
\[
\begin{CD}
    \XX  @>G>>  \XX \\
    @V\varphi VV       @VV\varphi V  \\
    \YY  @>G>>  \YY
\end{CD}\smallskip
\]
is commutative for every $G\in \cG$. For $\cG=\ZZ^d$, one usually
verifies this for $d$ elements that generate the group $\cG$.  The two
systems are \emph{topologically conjugate} when $\varphi$ is a
bijection.

\begin{coro}
  Let\/ $n>2$ with\/ $n\not\equiv 2 \bmod 4$ be fixed, $d=\varphi (n)$,
  and let\/ $k > \ell > 1$ be arbitrary positive integers.  Then, the
  TDS\/ $(\XX_k ,\ZZ^d)$ is not a factor of\/ $(\XX_{\ell},
  \ZZ^d)$. In particular, the two systems can never be topologically
  conjugate.
\end{coro}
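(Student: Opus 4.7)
The plan is to prove the corollary as a direct application of the entropy formula from Proposition~\ref{prop:top-ent}, using the fact that topological entropy is a monotone invariant under factor maps: if $(\YY,\cG)$ is a factor of $(\XX,\cG)$, then $h_{\mathrm{top}}(\YY) \leqslant h_{\mathrm{top}}(\XX)$. Since topological conjugacy implies that each system is a factor of the other, refuting the factor property in one direction immediately rules out conjugacy as well.

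The key quantitative step is to establish that $\zeta^{}_{K}(k) < \zeta^{}_{K}(\ell)$ whenever $k > \ell > 1$. This follows at once from the Dirichlet series representation
\[
    \zeta^{}_{K}(s) \, = \sum_{\fa \ts \neq \ts (0)} \frac{1}{\No(\fa)^s}
    \, = \, 1 + \sum_{\fa \ts \neq \ts (0), \ts \cO_n} \frac{1}{\No(\fa)^s}\ts,
\]
valid for $s>1$, where the summation runs over nonzero integral ideals of $\cO_n$. Each term with $\No(\fa) \geqslant 2$ is a strictly decreasing function of $s$, and there are infinitely many such terms, so $\zeta^{}_{K}$ is strictly decreasing on $(1,\infty)$. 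Consequently,
\[
  h_{\mathrm{top}}(\XX_k) \, = \, \frac{\log 2}{\zeta^{}_{K}(k)}
  \, > \, \frac{\log 2}{\zeta^{}_{K}(\ell)} \, = \, h_{\mathrm{top}}(\XX_{\ell})\ts .
\]

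If $(\XX_k, \ZZ^d)$ were a factor of $(\XX_{\ell}, \ZZ^d)$, this would force $h_{\mathrm{top}}(\XX_k) \leqslant h_{\mathrm{top}}(\XX_{\ell})$, which contradicts the strict inequality above. Hence $\XX_k$ is not a factor of $\XX_{\ell}$, and in particular the two systems cannot be topologically conjugate. There is no real obstacle here: the work has already been done in Proposition~\ref{prop:top-ent}, and the only thing to verify beyond the standard monotonicity of entropy under factor maps is the strict monotonicity of $\zeta^{}_{K}$ on $(1,\infty)$, which is elementary.
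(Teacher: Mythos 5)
Your proposal is correct and follows essentially the same route as the paper: both use the entropy formula of Proposition~\ref{prop:top-ent}, the (strict) monotonicity of $\zeta^{}_{K}$ on $(1,\infty)$, and the fact that the topological entropy of a factor cannot exceed that of its preimage. The only difference is that you spell out the strict monotonicity via the Dirichlet series, a detail the paper leaves implicit.
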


\begin{proof}
  Observe that the Dedekind zeta function $\zeta^{}_{K} (s)$ is a
  decreasing function of the real variable $s>1$, with
  $\lim_{s\to\infty} \zeta^{}_{K} (s) =1$.  Consequently, by
  Proposition~\ref{prop:top-ent}, the topological entropy of
  $(\XX_k, \ZZ^d)$ is increasing in $k$, approaching $\log (2)$ as
  $k\to\infty$.
   
  Since the topological entropy of a factor can never exceed that of
  its preimage, compare \cite[Prop.~10.1.3]{VO}, the two claims are
  obvious.
 \end{proof}

 Since $\XX_k$ is compact, there exists at least one
 translation-invariant probability measure on it. In fact, there are
 many. A particularly relevant one is the \emph{Mirsky measure},
 called $\mu_{_\mathrm{M}}$, which can be specified by attaching the
 tied, natural frequencies of the possible patches (or clusters) to
 the cylinder sets defined by them. These frequencies emerge by
 counting the number of occurrences of any fixed, finite patch (or
 cluster) of $V^{\prime}_{k}$ within $B_r (0)$, dividing by the
 $\vol \bigl( B_r (0)\bigr)$, and taking the limit as $r\to\infty$.
 As such, the Mirsky measure is well defined; see \cite{PH,BH} and
 references therein. This way,
 $\bigl( \ts \XX^{}_k, \ZZ^d, \mu_{_\mathrm{M}} \bigr)$ becomes a
 \emph{measure-theoretic dynamical system}, MTDS. The Mirsky measure
 is ergodic, compare \cite{PH,BHS}, which is to say that any
 $\ZZ^d$-invariant subsets of $\XX_k$ has measure either $0$ or
 $1$. For such systems, there exists the notion of the \emph{dynamical
   spectrum}. Here, one has the following result.

\begin{theorem}\label{thm:spectrum}
  The dynamical spectrum of the MTDS\/
  $\bigl( \ts \XX^{}_k, \ZZ^d, \mu_{_\mathrm{M}} \bigr)$ is pure
  point, which is to say that the space\/
  $L^2 \bigl( \XX^{}_k, \mu_{_\mathrm{M}} \bigr)$ possesses a basis of
  simultaneous eigenfunctions under the action of\/ $\ZZ^d$.
  
  Equivalently, the natural diffraction measure of the uniform Dirac comb on\/
  $V'_k$ is a pure-point measure.
\end{theorem}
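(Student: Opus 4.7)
The plan is to leverage the weak model set structure of $V'_k$ that has already been established in this section. By the cut and project scheme \eqref{eq:CPS} together with Fact~\ref{fact:nat-dens}, the set $V'_k$ is a weak model set whose natural density equals $\vol (\vO) = 1/\zeta^{}_{K}(k)$, which identifies it as a weak model set of \emph{maximal} density in the sense of \cite{BHS}. The two formulations in the theorem are then obtained by combining two ingredients from the literature already invoked in the paper.

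First, I would invoke the Baake--Huck--Strungaru theorem from \cite{BHS}: for a weak model set of maximal density relative to a van Hove averaging sequence (here, the centred balls of growing radius as used in Fact~\ref{fact:nat-dens}), the unique autocorrelation of its uniform Dirac comb has pure-point Fourier transform. Applied to $V'_k$, this yields the second (diffraction) assertion of the theorem verbatim, with diffraction measure supported on the dual group of $H$ pulled back to $\RR^d$ along the star map.

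Second, for the dynamical spectrum formulation, I would apply the Baake--Lenz correspondence \cite{BL}. Since the Mirsky measure $\mu^{}_{_\mathrm{M}}$ is ergodic (as recorded in \cite{PH,BHS}), this correspondence shows that pure-point diffraction of the uniform Dirac comb on $V'_k$ is equivalent to pure-point dynamical spectrum of the MTDS $\bigl(\XX^{}_{k}, \ZZ^d, \mu^{}_{_\mathrm{M}}\bigr)$. Concretely, each non-trivial character $\chi \in \widehat{H}$ induces a joint eigenfunction of the $\ZZ^d$-action on $L^2\bigl(\XX^{}_{k}, \mu^{}_{_\mathrm{M}}\bigr)$ by lifting along the torus parametrisation $z + V'_k \mapsto z^{\star}$, extended continuously to $\XX^{}_{k}$, and these eigenfunctions span $L^2\bigl(\XX^{}_{k}, \mu^{}_{_\mathrm{M}}\bigr)$. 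By the Halmos--von Neumann theorem, the MTDS is then measure-theoretically isomorphic to the Kronecker factor given by the rotation of $H$ by the dense subgroup $\pi^{}_{\text{int}}(\cL)$ from \eqref{eq:CPS}.

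The only delicate bookkeeping concerns the choice of averaging sequence: the natural density of $V'_k$ from Fact~\ref{fact:nat-dens}, the definition of the patch frequencies for $\mu^{}_{_\mathrm{M}}$, and the autocorrelation underlying the diffraction measure must all refer to the same sequence of centred balls of increasing radius, so that the maximal-density hypothesis of \cite{BHS} applies on the nose. This compatibility is built into our definitions, so no genuine obstacle arises, and the theorem follows by combining the two cited results.
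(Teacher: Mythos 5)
Your proposal is correct and follows essentially the same route as the paper: pure-point diffraction of $V'_k$ as a weak model set of maximal density via \cite{BHS}, then the equivalence of pure-point diffraction and pure-point dynamical spectrum via \cite{BL}. One small caveat in your aside: the eigenfunctions induced by characters of $H$ cannot be \emph{continuously} extended to all of $\XX_k$ (the paper notes that only the trivial eigenfunction is continuous); they are merely defined $\mu_{_\mathrm{M}}$-almost everywhere, which suffices for the $L^2$ statement.
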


\begin{proof}
  The pure-point diffraction result is proved, in even greater
  generality, in \cite{BHS}, and also follows from \cite{Keller}. The
  first claim then follows from \cite{BL}, via the equivalence theorem
  for the two notions of pure-point spectra.
\end{proof}

It is interesting to note that, except for the trivial eigenfunction
$f\equiv 1$, no eigenfunction is continuous. In other words, the
topological point spectrum is trivial. However, all eigenfunctions are
almost continuous in the sense that they are continuous on a subset of
$\XX_k$ of full measure under $\mu_{_\mathrm{M}}$.  This follows from
the results in \cite{BHS,Keller}.

\begin{remark}\label{rem:max-entropy}
  The measure-theoretic entropy of this MTDS vanishes \cite{PH}. This
  means that the Mirsky measure is not a measure of maximal entropy
  (which would equal the topological entropy, $h_{\text{top}}$, by the
  variational principle of ergodic theory). This is connected with the
  hereditary structure of $\XX_k$; see \cite{Peck} for the related
  analysis of the square-free integers of $\ZZ$.

  However, the measure of maximal entropy, in all our cases, is a
  product of the Mirsky measure with the Bernoulli measure of a fair
  coin toss. Further, the set of eigenfunctions (which can be obtained
  via the Fourier--Bohr coefficients of a generic element in $\XX_k$)
  is the same for both, with countably many Lebesgue components being
  added to the spectrum for the maximum entropy measure. Since this
  does not add any interesting feature, we focus on the Mirsky measure
  throughout.  \exend
\end{remark}

By the Halmos--von Neumann theorem, two ergodic dynamical systems with
pure-point spectrum are measure-theoretically isomorphic if and only
if they have the same spectrum. This gives further ways to distinguish
$k$-free systems from one another, though the topological notion seems
more useful in this case.

\section{Intermezzo: Algebraic number fields, units,
  and ramified primes}\label{sec:inter}

The further development profits from a more general perspective. So,
let $K$ be an algebraic number field of degree $d$ over $\QQ$, with
ring of integers $\cO = \cO^{}_{K}$. By Hilbert's theorem, we may
assume $K = \QQ(\theta)$, where $\theta$ is an algebraic number of
degree $d$, which is also the rank of $\cO$ as a $\ZZ$-module. It can
thus be identified with a lattice $\vG \subset \RR^d$, for instance
via its Minkowski embedding; see \cite{Neukirch, TAO} for
background. In particular, one can realise $\vG = \ZZ^d$ via a
Cartesian version, as detailed above for the cyclotomic case.  

Every principal ideal $(x) = x \cO$ of a non-zero $x \in \cO$ has a
unique factorization into prime ideals,
$(x) = \prod_{\fp} \fp^{v_{\fp}(x)}$.  Here, the $v_{\fp}(x)$ are
non-negative integers, only finitely many of which are
positive. Note that $v_{\fp}(xy) = v_{\fp}(x) + v_{\fp}(y)$ and
$v_{\fp}(x+y) \geqslant \min\left\{v_{\fp}(x), v_{\fp}(y)\right\}$
with equality whenever $v_{\fp}(x) \not= v_{\fp}(y)$.  Let $V_k$ as
before denote the set of $k$-free elements of $\cO$, with
$k\geqslant 2$.  Then, a non-zero $x \in \cO$ belongs to $V_k$ if
$v_{\fp}(x)<k$ for all (non-zero) prime ideals $\fp$ in $\cO$.  Let
$\pram$ be the set of rational primes that are ramified in $K$, which
is finite.

Now, for fixed $k$, we consider the set
\[
  W^{}_{\nts k} \, \defeq \, \{ x \in V^{}_k : v^{}_{\fp} (x) = 0
  \text{ for all } \fp \ts\ts |\ts\ts p \notin \pram \} \ts .
\]
From now on, we say that two elements $x,y \in \cO$ are
\emph{coprime}, denoted by $(x,y)=1$, if the principal ideals $(x)$
and $(y)$ have disjoint decompositions into prime ideals of $\cO$,
which is to say that the ideal generated by $x$ and $y$ is $\cO$.
Based upon the coprimality reasoning used in \cite{BBHLN,BBN}, we get
the following general property.

\begin{lemma}\label{lem:the-usual}
  Let\/ $K$ be a number field and\/ $k \geqslant 2$ an integer.
  Consider\/ $A \in \stab(V_k)$ and let\/ $p$ be a prime that does not
  ramify in\/ $K$. Then, if\/ $x \in V_k$ satisfies\/ $(p,x) = 1$, one
  also has\/ $(p, A(x)) = 1$. In particular, $A(W_k) \subseteq W_k$.
\end{lemma}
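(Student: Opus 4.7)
The plan is to combine the $\ZZ$-linearity of $A$ with the unramified hypothesis on $p$. Unramifiedness is precisely the statement that $p$ itself serves as a $\fp$-adic uniformiser at every prime ideal $\fp \mid p$, i.e.\ $v_{\fp}(p) = 1$; this lets us translate the global constraint $A(V_k) \subseteq V_k$ into a sharp local bound on $v_{\fp}(A(x))$ for each such $\fp$.

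The key step is to apply $A$ not to $x$ alone but to the auxiliary family $y_j \defeq p^j x$ for $j = 0, 1, \ldots, k-1$. First I would verify that every $y_j$ still lies in $V_k$ via a short valuation check: for $\fq \nmid p$ the valuation is unchanged and remains below $k$, while for $\fp \mid p$ the coprimality assumption $v_{\fp}(x) = 0$ combined with $v_{\fp}(p) = 1$ gives $v_{\fp}(y_j) = j < k$. By $\ZZ$-linearity we have $A(y_j) = p^j A(x)$, and the hypothesis $A \in \stab(V_k)$ then forces $p^j A(x) \in V_k$. For any $\fp \mid p$ this translates into the inequality $j + v_{\fp}(A(x)) < k$, and specialising to $j = k-1$ pins $v_{\fp}(A(x))$ down to $0$. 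Since this holds for every prime above $p$, the conclusion $(p, A(x)) = 1$ follows.

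The ``in particular'' clause is then immediate: an element $x \in W_k$ is $k$-free and coprime to every $p \notin \pram$, the first part of the lemma upgrades that coprimality to $A(x)$ for each unramified $p$, and the inclusion $A(V_k) \subseteq V_k$ already secures $A(x) \in V_k$, so $A(x) \in W_k$. I do not anticipate a genuine obstacle; the whole argument is essentially a single telescoping idea. The point deserving care is \emph{why} the unramified hypothesis is indispensable: the identity $v_{\fp}(p^j A(x)) = j + v_{\fp}(A(x))$ uses $v_{\fp}(p) = 1$ in a crucial way, and for ramified primes with ramification index $e \geqslant 2$ the family $y_j$ would leave $V_k$ already around $j \geqslant \lceil k/e \rceil$, yielding a strictly weaker bound that does not suffice to conclude $v_{\fp}(A(x)) = 0$.
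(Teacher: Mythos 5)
Your argument is correct and is essentially the paper's own proof: the paper applies $A$ directly to $p^{k-1}x$ (your $j=k-1$ case), notes that an unramified $p$ is square-free as an element of $\cO$ so that $p^{k-1}x\in V_k$, and reads off $(p,A(x))=1$ from $p^{k-1}A(x)=A(p^{k-1}x)\in V_k$. The ``in particular'' clause is handled identically, so there is nothing to add beyond observing that only the top exponent $j=k-1$ of your auxiliary family is actually needed.
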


\begin{proof}
  Let $A$ be as assumed, and let $p$ be any rational prime that is not
  ramified, and thus square-free when considered as an element of
  $\cO$. Then, the condition $x\in V_k$ together with $(p,x)=1$
  implies $p^{k-1}\ts x \in V_k$, hence also
  $p^{k-1} A(x) = A(p^{k-1}x) \in V_k$ due to $A(V_k) \subseteq V_k$
  together with the $\ZZ$-linearity of $A$. But this is only possible
  if $(p, A(x))=1$.

  Now, if $x$ is an arbitrary element of $W_k\!$, we have
  $v^{}_{\fp} (x) = 0$ for every prime ideal that lies over a
  non-ramified rational prime $p$, hence $(p,x)=1$.  Then, also
  $(p, A(x))=1$, which implies $v^{}_{\fp} (A(x)) =0$. This gives
  $A(W_k)\subseteq W_k$ as claimed.
\end{proof}

At this point, in order to proceed, we need an argument that
Lemma~\ref{lem:the-usual} can be strengthened to conclude that
$A(W_k)=W_k$. In \cite{BBN}, for the class of $k$-free numbers in
quadratic number fields, this was done by viewing the level sets
$W^{}_{C} \defeq \{ x \in \cO : N(x) \in C \}$, where $N$ denotes the
field norm, as geometric objects and representing them as the
intersection of $\ZZ^2$ with conics.  Though it is possible to extend
this idea, at least to some extent, to the case at hand, this route
seems unlikely to be extendable to general algebraic number fields,
wherefore we use an alternative path of a more algebraic flavour.

\begin{prop}\label{prop:AW=W}
  Let\/ $\cO$ be the ring of integers in a number field\/ $K$. 
  Then, the following properties hold.
\begin{enumerate}\itemsep=2pt
\item If\/ $A\colon \cO \xrightarrow{\quad} \cO$ is a bijective\/
  $\ZZ$-linear map with\/ $A(W_{\nts k}) \subseteq W_{\nts k}$, one
  has\/ $A(W_{\nts k}) = W_{\nts k}$.
\item If\/ $A\colon \cO \xrightarrow{\quad} \cO$ is a bijective\/
  $\ZZ$-linear map with\/ $A(V_{\nts k}) \subseteq V_{\nts k}$, one
  has\/ $A(V_{\nts k}) = V_{\nts k}$.
\item The monoid\/ $\stab(V_k)$ is a group.
\end{enumerate}
\end{prop}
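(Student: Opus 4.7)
We prove (ii) first, deduce (iii) at once, and handle (i) by the same argument with $W_k$ in place of $V_k$. The strategy is algebraic: reduce $A$ modulo $p^k\cO$ for each rational prime $p$, landing in a finite ring where a pigeon-hole argument replaces the quadratic-case geometry of norm-level conics used in \cite{BBN}.

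For part (ii), the choice of a $\ZZ$-basis of $\cO$ identifies $A$ with an element of $\GL(d,\ZZ)$, so $\det A = \pm 1$. Hence, for every rational prime $p$, the reduction $\bar A$ is a $\ZZ/p^k\ZZ$-linear bijection of the finite ring $\cO/p^k\cO$. Set $S_p \defeq \{\bar y \in \cO/p^k\cO : y \in V_k\}$, a finite subset. The hypothesis $A(V_k) \subseteq V_k$ gives $\bar A (S_p) \subseteq S_p$; injectivity of $\bar A$ on the finite set $S_p$ then forces $\bar A$ to restrict to a bijection of $S_p$, so $\bar A^{-1}(S_p) = S_p$.

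The crux is a non-archimedean observation. Writing $p \cO = \prod_{\fp \mid p} \fp^{e_\fp}$, one has $p^k \cO \subseteq \fp^{k e_\fp}$, so two lifts of the same residue $\bar z \in \cO/p^k\cO$ differ by an element with $v_\fp \geqslant k e_\fp$. By the ultrametric inequality, $v_\fp(z)$ is then the same for all such lifts, provided $v_\fp(z) < k e_\fp$. For elements of $V_k$ this is automatic, since $v_\fp < k \leqslant k e_\fp$ at every $\fp \mid p$. Consequently, any $z \in \cO$ with $\bar z \in S_p$ inherits $v_\fp(z) < k$ at all $\fp \mid p$. Applied to $z = A^{-1}(y)$ for $y \in V_k$, this gives $v_\fp(A^{-1}(y)) < k$ for every $\fp \mid p$; letting $p$ range over all rational primes yields $A^{-1}(y) \in V_k$. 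Combined with the hypothesis, $A(V_k) = V_k$.

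Part (iii) follows immediately: $\stab(V_k)$ is a monoid under composition that contains the identity and, by (ii), admits inverses, hence is a group. Part (i) uses the same template with $S'_p \defeq \{\bar y : y \in W_k\}$ replacing $S_p$; the $W_k$-defining valuation at a prime $\fp \mid p$ is $v_\fp = 0$ when $p \notin \pram$ and $v_\fp < k$ when $p \in \pram$, and in both cases $v_\fp < k e_\fp$, so the same non-archimedean reasoning delivers $A^{-1}(W_k) \subseteq W_k$ and hence $A(W_k) = W_k$. The principal hurdle throughout is precisely this non-archimedean lifting invariance; once it is in place, the rest reduces to pigeon-hole on a finite ring, using only the single global input $\det A = \pm 1$.
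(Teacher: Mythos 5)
Your proof is correct and follows essentially the same route as the paper's: reduce $A$ modulo $p^k$, use pigeonhole on the finite quotient to see that the image of $V_k$ (resp.\ $W_k$) there is preserved bijectively, and then invoke the ultrametric inequality to conclude that the valuations $v_\fp$ at all $\fp\mid p$ are determined by the residue class mod $p^k\cO$, so that $A^{-1}$ also preserves the set. The paper packages the last step by explicitly exhibiting $A^{-1}(b)=b_m-m\ts y_m$ with $m=p^k$ and computing $v_\fp$ of that element, but the substance is identical.
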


\begin{proof}
  We first observe that (2) implies (3). 
  We now let $B_k$ be either of $V_k$ or $W_k$ and prove (1)
  and (2) at the same time.
  Recall that $0 \not\in B_{\nts k}$, and let $b \in B_{\nts k}$ be
  arbitrary, but fixed. For any integer $m \geqslant 2$, the map $A$
  induces a bijection
\[
	A_m \colon  \cO / m \cO
	 \xrightarrow{\quad} \cO / m \cO \ts ,
\]
where $A (m \cO) = m (A \cO) = m \cO$ shows that the equivalence
classes are preserved.  Let $\widetilde{B}_{k}$ be the image of
$B_{\nts k}$ under the canonical projection to $\cO / m \cO$. Then,
$A_m \bigl( \widetilde{B}_{k} \bigr) \subseteq \widetilde{B}_{k}$;
but $\widetilde{B}_{k}$ is a finite set, so that indeed
$A_m \bigl( \widetilde{B}_k \bigr) = \widetilde{B}_k$. In other words,
for any integer $m \geqslant 2$, there is a $b_m \in B_k$ and an
$x_m \in \cO$ such that $A (b_m) = b + m \ts x_m$.  Since $A$ is a
surjection on $\cO$, we find a $y_m \in \cO$ such that $A(y_m) =
x_m$. The $\ZZ$-linearity of $A$ now gives
\[
	A(b_m - m\ts y_m) \, = \, A(b_m) - m \ts A(y_m)
	\, = \, b + m\ts  x_m - m\ts x_m \, = \, b \ts .
\]
Let us set $u \defeq b_m - m y_m \in \cO$, which does not depend on
$m$, because $A$ is injective. So, we have $u = A^{-1} (b)$, and now
need to show that $u \in B_k$.  To this end, let $\fp$ be any non-zero
prime ideal in $\cO$ and let $p \in \ZZ$ be the rational prime below
$\fp$. 
We choose $m = p^k$. Then, since
$b_{p^k} \in B_{\nts k} \subset V_k$ by construction, we get
$v_{\fp}(b_{p^k}) < k$, which is strictly less than
$v_{\fp}(p^k y_{p^k})$. So, we have
\[
  v_{\ts\fp}(u) \, = \, \min \bigl\{ v_{\ts\fp} (b_{\nts p^k}),
  v_{\ts\fp}(p^k y_{\nts p^k}) \bigr\} \, = \, v_{\fp}(b_{p^k}) \ts .
\]
This shows that, for any $b\in B_{\nts k}$, we have
$u = A^{-1} (b) \in B_{\nts k}$ as desired.
\end{proof}

\section{The stabiliser of $k$-free integers in cyclotomic 
   fields}\label{sec:stab}

 The imaginary quadratic fields $\QQ (\ii)$ and $\QQ (\rho)$ from
 Fact~\ref{fact:Gauss-Eisen} are examples of cyclotomic fields. In
 fact, they are precisely the two fields $\QQ (\xi^{}_{n})$ with
 $\varphi (n) = 2$, hence the only quadratic among the cyclotomic fields.
 So, we know $\stab (V_k)$ for these two cases from \cite{BBN}, as
 recalled in Fact~\ref{fact:Gauss-Eisen}. In particular, the monoid
 $\stab (V_k)$ is a group in these cases.

 The next possible value is $\varphi(n)=4$, which holds for
 $n\in \{5,8,12\}$.  The corresponding cyclotomic fields feature
 prominently in the theory of aperiodic order via planar quasicrystals
 with $n$-fold symmetry; see \cite[Sec.~2.5.2]{TAO}. An explicit
 derivation (which we omit at this point) leads to the result
 that the group structure
\[
    \stab (V^{}_k ) \,  \simeq \, \cO^{\times}_{n} \nts
    \rtimes \Aut^{}_{\QQ} \bigl( \QQ (\xi^{}_{n} ) \bigr)
    \, =  \, \cO^{\times}_{n} \nts\rtimes \Gal \bigl(
    \QQ( \xi^{}_{n})/\QQ\bigr)
\]
also holds in these three cases (which will become a special case
of Theorem~\ref{thm:stab} below).  So, we have
this structural result for an example where $n$ is a prime, for
another where $n$ is a prime power, and also for an example where $n$
is composite. This suggests that the result might hold for all
cyclotomic fields, the proof of which is the goal of this section.

Now, we consider
$W_{\nts k} = \{ x \in V_{k} : \ts x \in \fp \Rightarrow \fp \mid n
\}$, where $\fp$ runs through all prime ideals in $\cO_n$.  Note
that this is in accordance with the definition given in \S
\ref{sec:inter}, as the primes dividing $n$ are exactly those which
ramify in $\QQ(\xi_n)$, by \cite[Prop.~2.3]{W}.  Here, we get the
following coprimality result, which is a variant of
Lemma~\ref{lem:the-usual} for $V_k$ in the field $K=\QQ(\xi^{}_{n})$.
The last claim is a consequence of Proposition~\ref{prop:AW=W}.

\begin{lemma}\label{lem:coprimality_lemma}
  Let\/ $n$ and\/ $k$ be as above, let\/ $p \nmid n$ be a prime, and
  let\/ $A \in \stab (V_k)$. Then, if\/ $x\in V_k $ satisfies\/
  $(p,x)=1$, one also has\/ $\bigl( p, A(x) \bigr) = 1$. In
  particular, $A(W_k)  = W_k$.     \qed
\end{lemma}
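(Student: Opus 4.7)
The plan is to deduce this as a specialisation of Lemma~\ref{lem:the-usual} combined with Proposition~\ref{prop:AW=W}(1). The first step is the hypothesis translation: for $K = \QQ(\xi^{}_{n})$, the set $\pram$ of ramified rational primes is exactly $\{ p : p \mid n \}$, by \cite[Prop.~2.3]{W}. So the condition ``$p \nmid n$'' matches ``$p \notin \pram$'' in Lemma~\ref{lem:the-usual}, and the first assertion is then literally that of Lemma~\ref{lem:the-usual} applied in the cyclotomic case.

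For completeness, I would briefly reproduce the short coprimality argument in this setting. Since $p \nmid n$ is unramified, $v^{}_{\fp}(p) = 1$ for every prime $\fp$ of $\cO_n$ with $\fp \mid p$, and $v^{}_{\fq}(p) = 0$ for $\fq \nmid p$; in particular, $p$ is square-free as an element of $\cO_n$. Given $x \in V_k$ with $(p,x)=1$, one checks at each non-zero prime $\fq$ of $\cO_n$ that
\[
   v^{}_{\fq}(p^{k-1} x) \, = \, (k-1)\ts v^{}_{\fq}(p) + v^{}_{\fq}(x) \, < \, k \ts ,
\]
because the two summands are supported on disjoint sets of primes (thanks to $(p,x)=1$) and each is strictly below $k$. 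Hence $p^{k-1} x \in V_k$, so by $\ZZ$-linearity of $A$ and $A(V_k) \subseteq V_k$, also $p^{k-1} A(x) = A(p^{k-1} x) \in V_k$. Were there a prime $\fp \mid p$ with $\fp \mid A(x)$, then $v^{}_{\fp}\bigl(p^{k-1} A(x)\bigr) \geqslant (k-1) + 1 = k$, contradicting $k$-freeness. Thus $\bigl( p, A(x) \bigr) = 1$, as claimed.

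For the second assertion, I would note that $W_k$ is characterised as the set of $x \in V_k$ with $(p,x)=1$ for every rational prime $p$ that does not divide $n$. Applying the first part to each such $p$ yields $A(W_k) \subseteq W_k$. The equality $A(W_k) = W_k$ is then supplied by Proposition~\ref{prop:AW=W}(1), which upgrades any $\ZZ$-linear bijection sending $W_k$ into itself to one sending it onto itself. No substantive obstacle remains here: the work has been done in Lemma~\ref{lem:the-usual} (the valuation bookkeeping) and Proposition~\ref{prop:AW=W}(1) (the surjectivity upgrade via reduction modulo $p^k$); the only task is the matching of ``unramified in $\QQ(\xi^{}_n)$'' with ``$p \nmid n$''.
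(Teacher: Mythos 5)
Your proposal is correct and matches the paper's own (implicit) proof exactly: the paper states the lemma with a \qed precisely because it is Lemma~\ref{lem:the-usual} specialised to $K=\QQ(\xi^{}_n)$ (using that the ramified primes are exactly the divisors of $n$, under the standing convention $n\not\equiv 2\bmod 4$), with the upgrade from $A(W_k)\subseteq W_k$ to equality supplied by Proposition~\ref{prop:AW=W}(1). Your reproduced valuation bookkeeping is likewise the same argument as in the proof of Lemma~\ref{lem:the-usual}, so nothing further is needed.
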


Now, let $n$ be fixed, with $n \not \equiv 2 \bmod 4$, and let
$1 < m \, | \,\ts n$, also with $m \not \equiv 2 \bmod 4$.  Next,
consider the splitting primes
\[
   S_m \,  \defeq \, \{ \ell \text{ prime} : \ell \equiv 1 \bmod m \} 
        \, = \, \{ \ell \ne 2 : \ell \text{ splits completely in }
        \QQ (\xi^{}_{m} ) \} \ts ,
\]
where the equality follows from \cite[Cor.~I.10.4]{Neukirch}.  Let us
consider this in more detail.  Clearly, $\FF^{\times}_{\ell}$ is
cyclic of order $\ell -1 $ for $\ell \in S_m$. Since $m$ divides
$ \ell - 1$, there is an $r\in\ZZ$ such that
$\ord (r \bmod \ell) = m$.  Let $\Phi_m (x) \in \ZZ[x]$ be the $m$-th
cyclotomic polynomial, which is the minimal monic polynomial of
$\xi^{}_{m}$. It has degree $\varphi (m)$ and is explicitly given by
\[
    \Phi_m (x) \; = \! \prod_{j \in (\ZZ/m\ZZ)^{\times}}
    \! \bigl( x - \xi^{j}_{m} \bigr).
\]
For the fixed triple $r,\ell,m$ from above, this implies $\Phi_m (x)
\equiv \prod_{j \in (\ZZ/m\ZZ)^{\times}} ( x - r^j ) \bmod \ell$.

By the characterisation of split primes in cyclotomic fields
  \cite[Prop.~I.8.3]{Neukirch}, we get
\begin{equation}\label{eqn:prime-decompositon-split-case}
    (\ell) \, = \, \ell \ts \cO_m \, = \, \ell \ts \ZZ [\xi^{}_{m} ]
    \; = \! \prod_{j \in (\ZZ/m\ZZ)^{\times}}
    \! \fp^{}_{m,j} \ts ,
\end{equation}
where each
$\fp^{}_{m,j} \defeq (\ell, \xi^{}_{m} - r^{j'}) = (\ell , \xi^{j}_{m}
- r)$, with $j j' \equiv 1 \bmod m$, is a prime ideal (in the
notation of \cite{Neukirch}, take $\fp = (\ell)$, $\smallO = \ZZ$,
and $\theta= \xi_m$. Then, $p(X) = \Phi_m(X)$, the prime ideal
$(\ell)$ is always coprime with the conductor
$\mathfrak{F} = \cO_m$, and $p_j(X) = X-r^{j'}$ together with
$e_j = 1$. Our $\varphi(m)$ then is the $r$ from
\cite{Neukirch}). All these ideals are distinct, because the
rational prime $\ell$ splits completely.

For each $\ell \in S_m$, which is an odd prime, the group
$(\ZZ / \ell^{\ts 2} \ZZ)^{\times}\!$ is cyclic of order
$\ell(\ell-1)$. It follows that
\begin{equation}\label{eqn:cardinality-exp-n}
  \card \big\{ x \in \ZZ / \ell^{\ts 2} \ZZ :
  x^n \equiv 1 \bmod \ell^{\ts 2} \big\}
  \, = \, \gcd \bigl( \ell(\ell-1), n \bigr)
  \; \leqslant \, n  \ts .
\end{equation}
Now, fix a prime $q \in S_n$. We assert that we can choose an
$a^{}_{q} \in \ZZ$ with the following three properties,
\begin{itemize}\itemsep=2pt
\item[(H1)] $\ord (a^{}_{q} \bmod q^2 ) = n \ts q  \ts $;
\item[(H2)] $a^{}_{q} \equiv 0 \bmod p$ for any prime
   $p \ts\ts | \ts n \ts $;
 \item[(H3)] $a_{q}^n \not\equiv 1 \bmod \ell^{\ts 2}$ for all
   $\ell \in S_m$ and all $1< m \ts\ts | \ts\ts n$ with
   $m \not \equiv 2 \bmod 4 \ts $.
\end{itemize}

\begin{remark}\label{rem:conditions-well-def}
  Note that these conditions do not contradict each other.  First,
  since $q \in S_n$, it splits completely in $K$. In particular, it
  does not ramify and hence $q \nmid n$, while (H1) clearly implies
  that (H3) holds with $\ell = q$. Second, $p \ts\ts | \ts\ts n$
  forces $p \not\in S_n$, but it may still happen that $p \in S_m$ for
  some divisor $m$ of $n$. However, if (H2) holds, $a_q$ must be
  divisible by $p$ so that $a_p \bmod p^2$ is not even invertible.
  \exend
\end{remark}

Indeed, for a fixed $q \in S_n$, there are infinitely many
choices for $a_q$.  We first consider (H1), (H2), and (H3) for
primes $\ell$ such that $\ell^2 \leqslant n$.  As these are
conditions for finitely many primes, which do not contradict each
other by Remark~\ref{rem:conditions-well-def}, they are satisfied in
a subset of $\ZZ$ of density $\delta > 0$ by the CRT.  
For $0 < R \leqslant \infty$, we write $S(R)$ for the set
of all primes $\ell \leqslant R$ that still need to be considered,
so $S(R)$ consists of all primes $\ell \nmid qn$, $\ell \leqslant R$
such that $\ell \in S_m$ for some $m \mid n$ and $\ell^2 >n$.  We
use $F_R$ to denote the set of all $a_q \in \ZZ$ that satisfy (H1),
(H2), and (H3) for all $\ell \leqslant R$. Then, for any
$R < \infty$, the density of $F_R$ is
\begin{equation}\label{eqn:lower-D}
  \delta \prod_{\ell \in S(R)} \bigl(1 - \gcd(\ell(\ell-1),n )
  / \ell^2 \bigr) \, \geqslant \, \delta \prod_{\ell \in S(R)}
  \bigl( 1 - n/ \ell^2 \bigr) \, \geqslant \, D \, > \, 0
\end{equation}
by \eqref{eqn:cardinality-exp-n}.  Here, $D$ is a positive constant,
which does not depend on $R$, and whose existence is guaranteed by the
facts that the associated series
$\sum_{\ell \text{ prime}} n/\ell^2$ converges, and each factor in
the infinite product is positive due to the condition $n < \ell^2$.
	
Let $T>0$ be an integer. Then, for every $R < \infty$, we have
\begin{eqnarray*}
  \#\bigl\{a_q \in F_{\infty} \mid 0 \leqslant a_q \leqslant T \bigr\}
  & \geqslant &
  \#\bigl\{a_q \in F_{R} \mid 0 \leqslant a_q \leqslant T\bigr\}\\[2mm]
  & &  - \sum_{\ell \in S(\infty) \setminus S(R)}
      \#\bigl\{0 \leqslant a_q \leqslant T: \ell^2 \mid a_q^n-1\bigr\}.
\end{eqnarray*}
Note that, for fixed $R$ and $T$, the latter is indeed a finite sum,
but the right hand side of the inequality might be negative.  By
\eqref{eqn:cardinality-exp-n}, in each interval of length $\ell^2$,
there are at most $n$ integers $a_q$ such that $\ell^2 \mid
a_q^n-1$. Combining this fact with \eqref{eqn:lower-D} and the
above inequality, we obtain
\begin{eqnarray*}
  \liminf_{T \to \infty} \frac{\# \bigl\{a_q \in F_{\infty} \mid 0
  \leqslant a_q \leqslant T \bigr\}}{T} & \geqslant & 
  \lim_{T \to \infty} \frac{\# \bigl\{a_q \in F_R \mid 0
  \leqslant a_q \leqslant T \bigr\}}{T} \; - \\[2mm]
  & & \lim_{T \to \infty} \myfrac{1}{T} \sum_{\ell \in S(\infty) \setminus S(R)}
  \# \bigl\{0 \leqslant a_q \leqslant T: \ell^2 \mid a_q^n-1 \bigr\} \\[2mm]
  & \geqslant & D - \sum_{\ell \in S(\infty) \setminus S(R)}
              \myfrac{n}{\ell^2} \ts .
\end{eqnarray*}
Since $\sum_{\ell \text{ prime}} n/\ell^2$ converges, the sum
$\sum_{\ell \in S(\infty) \setminus S(R)} n/\ell^2$ tends to zero as
$R \rightarrow \infty$. So, for sufficiently large $R$, we have 
\[
  D - \sum_{\ell \in S(\infty) \setminus S(R)} \frac{n}{\ell^2} 
  \, > \, 0 \ts ,
\]
and $F_{\infty}$ thus contains a subset of positive density. In
particular, it is infinite.

\begin{lemma}\label{lem:factors_of_a_q_primepower}
  Let\/ $ 3 \leqslant \nts m \!\mid\! n$ with\/
  $m\not \equiv 2 \, \bmod 4$, and\/ $j\in
  (\ZZ/m\ZZ)^{\times}\!$. Further, let\/ $a^{}_{q} \in \ZZ$ satisfy
  hypotheses\/ \textnormal{(H1)}, \textnormal{(H2)} and\/
  \textnormal{(H3)} from above.  Then, the following statements hold.
\begin{enumerate}\itemsep=2pt
 \item If\/ $\fp$ is a prime ideal that contains\/
   $\xi^{j}_{m\vphantom{t}} - a_{q}^{n/m}$, then\/
   $\fp \nts\nmid\nts n$ and\/
   $\fp^{}_{m,j} = (\ell, \xi^{j}_{m\vphantom{t}} \nts - a_{q}^{n/m} )
   \subseteq \fp$ for some\/ $\ell \in S_m$.  In fact,
   $\fp^{}_{m,j} = \fp \cap \ZZ[\xi^{}_{m}]$ is a prime ideal in\/
   $\QQ (\xi^{}_{m})$.
 \item One has\/
   $\xi^{j}_{m\vphantom{t}} \nts - a_{q}^{n/m} \in V^{}_{2}
   \setminus\nts W^{}_{\nts k}  = V^{}_{2}
   \setminus\nts W^{}_{\nts 2}\ts$.
\end{enumerate}  
\end{lemma}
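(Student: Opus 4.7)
The plan is to use $a_{q}^{n/m}$ as a valid choice of the order-$m$ generator $r$ appearing in \eqref{eqn:prime-decompositon-split-case}, with the rational prime $\ell$ determined by the containment of $\alpha \defeq \xi^{j}_{m} - a_{q}^{n/m}$ in $\fp$. For (1), I would first rule out that $\fp$ lies over a prime $p \ts\ts|\ts\ts n$: in that case, (H2) forces $p \ts\ts|\ts\ts a^{}_{q}$, hence $a_{q}^{n/m} \in \fp$, which combined with $\alpha \in \fp$ gives $\xi^{j}_{m} \in \fp$, contradicting that $\xi^{j}_{m}$ is a unit in $\cO^{}_{n}$. So $\fp$ lies over some $\ell \nmid n$. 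Passing to $\fq \defeq \fp \cap \ZZ[\xi^{}_{m}]$, the congruence $\xi^{j}_{m} \equiv a_{q}^{n/m} \bmod \fq$ forces the integer $a_{q}^{n/m}$ to have multiplicative order $m$ in $\FF^{\times}_{\ell}$, so $m \ts\ts|\ts\ts \ell - 1$ and $\ell \in S^{}_{m}$. Choosing $r \defeq a_{q}^{n/m} \bmod \ell$ as the generator in \eqref{eqn:prime-decompositon-split-case}, the ideal $(\ell, \xi^{j}_{m} - a_{q}^{n/m})$ is precisely $\fp^{}_{m,j}$ and is contained in $\fq$; maximality of $\fp^{}_{m,j}$ in the Dedekind domain $\ZZ[\xi^{}_{m}]$ then forces $\fp^{}_{m,j} = \fq$.

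For (2), write $b \defeq a_{q}^{n/m}$ and first verify $\alpha \in V^{}_{2}$. For any prime $\fp \ni \alpha$ of $\cO^{}_{n}$, part (1) gives $\fp \nmid n$ and $\fp \cap \ZZ[\xi^{}_{m}] = \fp^{}_{m,j}$ over some $\ell \in S^{}_{m}$. The factorisation $\Phi^{}_{m}(b) = \prod_{j'} (\xi^{j'}_{m} - b)$, combined with the observation that $\xi^{j'}_{m} - b \equiv \xi^{j}_{m} \bigl( \xi^{j'-j}_{m} - 1 \bigr) \bmod \fp^{}_{m,j}$ is a unit in the residue field for $j' \neq j$ (since $\xi^{}_{m}$ has order $m$ there), yields $v^{}_{\fp^{}_{m,j}}(\alpha) = v^{}_{\fp^{}_{m,j}}\bigl(\Phi^{}_{m}(b)\bigr) = v^{}_{\ell}\bigl(\Phi^{}_{m}(b)\bigr)$, where the last equality uses that $\ell$ splits completely in $\ZZ[\xi^{}_{m}]$. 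Since $\Phi^{}_{m}(b)$ divides $b^{m} - 1 = a_{q}^{n} - 1$ in $\ZZ$, hypothesis (H3) bounds $v^{}_{\ell}\bigl(\Phi^{}_{m}(b)\bigr) \leqslant 1$. Finally, $\ell \nmid n$ makes $\ell$ unramified in $\cO^{}_{n}$, so $e(\fp/\fp^{}_{m,j}) = 1$ and $v^{}_{\fp}(\alpha) \leqslant 1$, giving $\alpha \in V^{}_{2}$.

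To establish $\alpha \notin W^{}_{k}$, I would exhibit a single prime of $\cO^{}_{n}$ not dividing $n$ which contains $\alpha$, and the choice $\ell = q$ works: by (H1), $a^{}_{q}$ has order $n$ modulo $q$, so $b$ has order exactly $m$ in $\FF^{\times}_{q}$, is thus a primitive $m$-th root of unity there, and hence $q \ts\ts|\ts\ts \Phi^{}_{m}(b)$; the prime $\fp^{}_{m,j}$ over $q$ from (1) contains $\alpha$, and any prime of $\cO^{}_{n}$ above it witnesses $v^{}_{\fp}(\alpha) > 0$. The identity $V^{}_{2} \setminus W^{}_{k} = V^{}_{2} \setminus W^{}_{2}$ then follows from $W^{}_{k} \cap V^{}_{2} = W^{}_{2}$, valid because $V^{}_{2} \subseteq V^{}_{k}$. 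The main obstacle is the valuation accounting in part (2): one must carefully separate valuations at $\fp \subset \cO^{}_{n}$ from those at $\fp^{}_{m,j} \subset \ZZ[\xi^{}_{m}]$, invoking both complete splitting of $\ell$ in $\ZZ[\xi^{}_{m}]$ and unramifiedness of $\ell$ in $\cO^{}_{n}$ to kill both ramification indices; the arithmetic heart is that (H3) converts a mod-$\ell^{2}$ congruence into an integer-divisibility bound on $\Phi^{}_{m}(b)$ that transfers cleanly to ideal multiplicities in $\cO^{}_{n}$.
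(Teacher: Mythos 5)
Your proposal is correct and follows essentially the same strategy as the paper: (H2) rules out primes over divisors of $n$, the reduction of $\xi^{j}_{m}$ forces $\ord(a_q^{n/m}\bmod \ell)=m$ and hence $\ell\in S_m$, (H3) yields square-freeness, and (H1) with $\ell=q$ supplies a non-ramified prime divisor witnessing $\alpha\notin W_k$. The only cosmetic differences are that the paper establishes the exact order $m$ by a contradiction involving $\xi_p^j-1\in\fp$ rather than via separability of $x^m-1$ modulo $\ell$, and proves $\alpha\in V_2$ by raising the congruence $a_q^{n/m}\equiv\xi_m^j$ to the $m$-th power modulo $\fp^2$ instead of your (equivalent, slightly more elaborate) valuation computation through $\Phi_m(b)$.
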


\begin{proof}
  Let $ z \ts | \ts n$ be a rational prime and
  $z\ZZ \subset \fz\subset \cO_n$ a prime ideal above it.  By property
  (H2), we have $a_q\in z\ZZ \subset \fz$ and therefore
  $\xi^j_{m\vphantom{t}} - a_q^{n / m} \equiv \xi_{m\vphantom{t}}^j
  \not\equiv 0 \mod{\fz}$. Since the unit
  $\xi_{m\vphantom{t}}^j \in \cO^{\times}_n \!$ does not belong to any
  prime ideal, we see that no prime ideal $\fp$ with
  $\fp \ts | \ts (\xi^{j}_{m\vphantom{t}} - a_q^{n / m})$ can divide
  $n$, which corresponds to the first half of property $(1)$.  For any
  such $\fp$,
  $a_q^n = (a_q^{n / m})^m \equiv (\xi_{m\vphantom{t}}^j)^m \equiv 1
  \mod{\fp}$, hence $a_q^n - 1$ is an integer divisible by the prime
  ideal $\fp$, thus divisible by the rational prime $\ell$ below
  $\fp$, so $a_q^n - 1\in \ZZ \cap \fp = (\ell)$.
	
  From this, we also see that the multiplicative order of
  $a_q^{n / m}$ modulo $\ell$ must be a divisor of $m$. We will now
  prove that this order is exactly $m$. If this were not the case,
  there would be some prime divisor $p$ of $m$ such that
  $\ord (a_q^{n / m} \bmod \ell)$ divides $m / p$. Suppose that
  $m = p^a m'$, with $p \nmid m'$. From this, we get
\[ 
   \xi_p^j - 1 \, \equiv \, (\xi_m^j)^{m / p} - a_q^{n / p} 
   \, \equiv \, (a_q^{n / m})^{m / p} - a_q^{n / p} 
   \, \equiv \, 0 \mod{\fp}, 
\]
which implies that $\fp$ divides $p$. But then $\fp$ also divides $n$,
as it is a multiple of $p$, contradicting the above statement that a
divisor of $\xi_{m\vphantom{t}}^j - a_q^{n / m}$ cannot divide $n$.
As such a prime $p$ does not exist, we must have
$\ord \bigl( a_q^{n / m}\bmod\ell \bigr)=m$.  Consequently, $m$
divides $\ell-1$, hence $\ell \in S_m$.

As $\xi_{m\vphantom{t}}^j - a_q^{n/m} \in \fp$ by the choice of this
ideal, we see that
$\fp^{}_{m, j} = (\ell, \xi_{m\vphantom{t}}^j - a_q^{n / m})$ must be
contained in $\fp$. By our previous observations after
\eqref{eqn:prime-decompositon-split-case} with $r = a_q^{n/m}$, we see
that $\fp^{}_{m, j}$ is indeed a prime ideal in $\ZZ [\xi^{}_m]$, thus
establishing (1).

Now, suppose
$\fp^2 \ts\ts | \ts \bigl( \xi_{m\vphantom{t}}^j - a_q^{n / m} \bigr)$
for some prime ideal $\fp$, and thus
$\xi_{m\vphantom{t}}^j - a_q^{n / m} \notin V_2$.  As before, this
would imply
$a_q^n = (a_q^{n / m})^m \equiv (\xi_{m\vphantom{t}}^j)^m \equiv 1
\mod{\fp^2}$, that is, $a_q^n - 1 \in \fp^2 \cap \ZZ= (\ell^{\ts 2})$
and hence $a_q^n \equiv 1 \bmod \ell^{\ts 2}$ for some $\ell \in
S_m$. This contradicts (H3).  As the prime ideal $\fp$ was arbitrary,
we conclude that $\xi_{m\vphantom{t}}^j - a_q^{n / m}$ lies in $V_2$.

For the final claim, it now suffices to show that
$\xi_{m\vphantom{t}}^j - a_q^{n / m}$ is not a unit. This is a
consequence of property (H1). Indeed, it follows from this hypothesis
that $\ord (a_q \bmod q) = n$ and thus
$\ord (a_q^{n / m} \bmod q) = m$.  By
\eqref{eqn:prime-decompositon-split-case}, with $\ell = q$, we see
that $\xi_{m\vphantom{t}}^j - a_q^{n / m}$ is contained in some prime
ideal of $\ZZ [\xi^{}_m]$ dividing $q$. In particular, it is not a
unit.
\end{proof}

The following result establishes the key properties of $A(1)$.

\begin{lemma}\label{lem:preservation_of_roots_of_1}
  If\/ $ A(W_{\nts k}) = W_{\nts k}$, the following properties hold.
   \begin{enumerate}\itemsep=2pt
   \item $A(1)$ is a unit in\/ $\cO_n$.
   \item If\/ $\xi$ is a primitive\/ $m$-th root of unity
      for some\/ $m \ts\ts | \ts\ts n$, then so is\/
      $A (\xi) \, A(1)^{-1}$.
   \end{enumerate}
\end{lemma}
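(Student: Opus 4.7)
The plan is to prove~(2) first, working in the fraction field $K = \QQ(\xi^{}_n)$, and then to derive~(1) as a consequence by a basis argument. The main input is the following identity, which follows from Lemma~\ref{lem:factors_of_a_q_primepower}(2) and the $\ZZ$-linearity of $A$: for each $m \mid n$ with $3 \leqslant m \not\equiv 2 \bmod 4$, each $j \in (\ZZ/m\ZZ)^\times$, and each $a^{}_q \in F_\infty$,
\[
   A\bigl(\xi^{j}_{m}\bigr) \,-\, a_q^{n/m}\, A(1)
   \,=\, A\bigl(\xi^{j}_{m} - a_q^{n/m}\bigr)
   \,\in\, V_k \setminus W_{\nts k}.
\]
This single family of identities, parametrised by the density-positive set $F_\infty$ constructed above, supplies enough information for both assertions.

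For~(2), set $\alpha \defeq A\bigl(\xi^{j}_{m}\bigr)\, A(1)^{-1} \in K^\times$. Since $A\bigl(\xi^{j}_{m}\bigr)$ and $A(1)$ both lie in $W_{\nts k}$, their $\fp$-valuations vanish at every non-ramified prime, so $\alpha$ is integral at all such $\fp$. Dividing the displayed identity by $A(1)$ and comparing valuations at a non-ramified prime $\fp^{}_{a_q}$ dividing it (whose existence follows from $V_k \setminus W_{\nts k}$-membership) yields $\alpha \equiv a_q^{n/m} \bmod \fp^{}_{a_q}$. Raising to the $m$-th power and using that $a_q^n \equiv 1$ modulo the rational prime $\ell \in S_m$ below $\fp^{}_{a_q}$ (forced by the construction of $a^{}_q$, as in Lemma~\ref{lem:factors_of_a_q_primepower}(1)) gives $\fp^{}_{a_q} \mid \alpha^m - 1$. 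The crucial technical step is to argue, using the positive lower bound $D > 0$ on the density of $F_\infty$, that $\{\fp^{}_{a_q} : a^{}_q \in F_\infty\}$ cannot be confined to a finite collection of primes: a clustering would force $a^{}_q$ into finitely many residue classes modulo a fixed finite product of rational primes, contradicting the density estimate. Consequently, $\alpha^m - 1 \in K$ would be divisible by infinitely many distinct primes, forcing $\alpha^m = 1$, and therefore $\alpha \in \mu^{}_m \subset \cO_n^\times$. Primitivity follows because $\alpha \bmod \fp^{}_{a_q} = a_q^{n/m} \bmod \fp^{}_{a_q}$ has order exactly $m$ in the residue field by~(H1), so the order of $\alpha$ is a multiple of $m$ and, combined with $\alpha^m = 1$, equals $m$. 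The cases $m \in \{1,2\}$ are immediate from $\ZZ$-linearity, while $m \equiv 2 \bmod 4$ with $m \geqslant 6$ reduces to the odd case via writing a primitive $m$-th root as $-\zeta$ for a primitive $(m/2)$-th root~$\zeta$.

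For~(1), apply~(2) to each element of the $\ZZ$-basis $\{\xi^{i}_{n} : 0 \leqslant i < d\}$ of $\cO_n = \ZZ[\xi^{}_n]$, where $d = \varphi(n)$. Each $\xi^{i}_{n}$ is a primitive $m_i$-th root of unity with $m_i = n/\gcd(n,i) \mid n$, so by~(2) we have $A\bigl(\xi^{i}_{n}\bigr) = A(1)\,\zeta^{}_i$ with $\zeta^{}_i \in \cO_n^\times$. Since $A$ is a $\ZZ$-linear bijection of $\cO_n$, the images $\bigl\{A(1)\,\zeta^{}_i : 0 \leqslant i < d\bigr\}$ form a $\ZZ$-basis of $\cO_n$ as well. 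Expanding $1 = \sum_i c^{}_i \, A(1)\, \zeta^{}_i = A(1)\,\bigl(\sum_i c^{}_i \zeta^{}_i\bigr)$ with $c^{}_i \in \ZZ$ exhibits $\sum_i c^{}_i \zeta^{}_i \in \cO_n$ as a multiplicative inverse of $A(1)$, establishing $A(1) \in \cO_n^\times$. The main obstacle throughout is the clustering/density argument for the primes $\fp^{}_{a_q}$ in part~(2), which relies critically on the positive-density construction of $F_\infty$ established above.
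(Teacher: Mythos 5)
Your overall strategy for part (2) — reducing to the elements $\xi^{j}_{m}-a_q^{n/m}$, extracting the congruence $\alpha\equiv a_q^{n/m}\bmod \fp^{}_{a_q}$, and concluding $\alpha^m=1$ from divisibility by infinitely many primes — is the same as the paper's, and your basis argument for part (1) is a valid (mildly different) alternative to the paper's. But the step you yourself flag as crucial contains a genuine gap. You claim that if the primes $\fp^{}_{a_q}$ arising from $a_q\in F_\infty$ were confined to a finite set $\{\fp^{}_1,\ldots,\fp^{}_r\}$, then $F_\infty$ would be forced into finitely many residue classes modulo $\prod_j \ell_j$, ``contradicting the density estimate.'' There is no contradiction: a finite union of residue classes modulo a fixed modulus has \emph{positive} density, which is perfectly compatible with $\dens(F_\infty)\geqslant D>0$. (Indeed, condition (H2) already confines every element of $F_\infty$ to a single residue class modulo $\prod_{p\mid n}p$, so $F_\infty$ is certainly contained in finitely many residue classes modulo a suitable modulus.) The density bound $D$ is a fixed constant that bears no relation to $\sum_j n/\ell_j$, so nothing prevents all the congruences $\alpha\equiv a_q^{n/m}\bmod\fp^{}_{a_q}$ from landing on the same finite list of primes. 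The paper closes this hole differently: given a putative finite list $\fp^{}_1,\ldots,\fp^{}_r$ above rational primes $\ell_1,\ldots,\ell_r$, it chooses a \emph{new} $q\in S_n$ and an $a_q$ satisfying (H1)--(H3) together with extra congruence conditions forcing $\ord(a_q^{n/m}\bmod \ell_j)\neq m$ for every $j$; by Lemma~\ref{lem:factors_of_a_q_primepower} no prime above any $\ell_j$ can then divide $\xi^{j}_m-a_q^{n/m}$, so the prime produced by this $a_q$ is necessarily new. Your proof needs this (or an equivalent) exclusion device; the density estimate alone cannot do the job.

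A secondary, repairable imprecision: to assert that the rational prime $\ell$ below $\fp^{}_{a_q}$ lies in $S_m$ and satisfies $a_q^n\equiv 1\bmod\ell$, you need Lemma~\ref{lem:coprimality_lemma} to transfer the prime divisors of $\xi^{j}_m-a_q^{n/m}$ (controlled by Lemma~\ref{lem:factors_of_a_q_primepower}(1)) to those of its image under $A$; citing Lemma~\ref{lem:factors_of_a_q_primepower}(1) alone does not cover the image. Without this transfer, $\fp^{}_{a_q}$ could a priori lie over an arbitrary unramified prime, and the deduction $\fp^{}_{a_q}\mid\alpha^m-1$ would fail.
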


\begin{proof}
  Clearly, (1) is a direct consequence of (2). Indeed, if $A(1)$ were
  not a unit, it would be divisible by some prime ideal $\fp$.  From
  (2), both $A (\xi) A (1)^{- 1}$ and its inverse belong to $\cO_n$,
  so
\[ 
   \fp \mid \nts A (1) \:\Longrightarrow\:
   \fp \mid \nts (A(\xi)  A (1)^{- 1}) A (1) = A (\xi) \ts . 
\]
This holds for any root of unity in $K$ and, as
$\cO_n = \ZZ [\xi^{}_n]$, this would imply
$ \cO_n = A( \cO_n ) \subseteq \fp$, which is absurd. Thus, we only
need to prove (2).

This property is evident for $m = 1$, where $\xi = 1$. If
$m \equiv 2 \mod{4}$, we can write $m = 2 m'$ with $m'$ odd.  In this
case, $\xi = - \xi'$, where $\xi'$ is a primitive $m'$-th root of
unity. By linearity, we then have
\[ 
     A (\xi) A (1)^{- 1} \, = \, - A (\xi') A (1)^{- 1} 
     \, = \, (-1) \bigl( A (\xi') A (1)^{- 1} \bigr), 
\]
where the first term on the right has order $2$ and the second has
order $m'$ by induction. Thus, the product must have order $2m' = m$,
as $2$ and $m'$ are coprime.  Consequently, if the result holds for
$m'$, it must hold for $m$ as well. Without loss of generality, we may
assume $m > 1$ and $m \not\equiv 2 \mod{4}$, in line with our general
setting.

Fix a prime $q \in S_n$ and choose $a_q \in \ZZ$ as above.  By
Lemma~\ref{lem:factors_of_a_q_primepower},
$\xi - a_q^{n / m} \in V_2\subseteq V_k$, but it does not belong to
$W_k$. Since $A (W_k) = W_k$, an application of
Lemmas~\ref{lem:coprimality_lemma}
and~\ref{lem:factors_of_a_q_primepower} shows that there must be some
splitting primes $\ell, \tilde\ell \in S_m$ such that
$\xi - a_q^{n / m} \in \bigl( \tilde\ell, \xi - a_q^{n / m} \bigr)
\subseteq \tilde{\fp}$, together with
$A \bigl( \xi - a_q^{n / m} \bigr) \in \fp \supseteq (\ell, \xi^j -
a_q^{n / m})$, where $\fp$ and $\tilde{\fp}$ are both prime ideals
that divide $(\ell)$ and $(\tilde\ell)$, respectively.  Linearity
implies
$A (\xi - a_q^{n / m}) = A (\xi) - a_q^{n / m} A (1) \equiv 0
\mod{\fp}$.  As $\ord \bigl( a_q^{n / m} \bmod \ell \bigr)= m$, we
must have $A (\xi)^m \equiv A (1)^m \mod{\fp}$.

Next, we claim that there are infinitely many choices of $\ell$ and
$\fp$ that satisfy these properties. To see this, suppose to the
contrary that there are only finitely many prime ideals
$\fp^{}_1, \ldots, \fp^{}_r$ with
$A (\xi)^m \equiv A (1)^m \mod{\fp_j}$ for
$1 \leqslant j \leqslant r$. Write $(\ell_j) =\fp_j \cap \ZZ$, with
$\ell_j \in S_m$ the corresponding splitting prime, and choose a prime
$q \in S_n$ strictly larger than all of the $\ell_j$. Take the integer
$a_q$ associated to $q$ which satisfies (H1), (H2) and (H3), and
suppose that $a_q$ also satisfies
\[ 
   \ord \bigl( a_q \bmod \ell_j^2 \bigr) \, = \, 2  \, \neq \, m \ts . 
\]
By the same arguments as before, we see that the set of suitable
numbers $a_q$ still contains a subset of positive density, and
is thus non-empty. As in Lemma~\ref{lem:factors_of_a_q_primepower}, we
may obtain some rational prime $\ell$ from $q$ for which
$\ord (a_q^{n / m} \bmod \ell) = m$. Further, for some prime ideal
$\fp \supseteq (\ell)$, which satisfies $\fp \cap \ZZ= (\ell)$, we
must have $A (\xi - a_q^{n / m}) \in \fp$, implying that the
congruence $A (\xi)^m \equiv A (1)^m \mod{\fp}$ holds for this $\fp$
as well. But the latter is different from all the
$\fp_1, \ldots, \fp_r$, a contradiction.

We have shown that $A(\xi)^m - A(1)^m$ is contained in
infinitely many prime ideals.  Since $\cO_n$ is a Dedekind domain,
this implies that $A (\xi)^m = A (1)^m$.  Consequently, there is an
$m$-th root of unity $\xi'$ such that $A (\xi) = \xi' A (1)$ (note
that, regardless of whether $A(1)$ is a unit or not, $A(1)^{-1}$ is
well-defined as an element of $K$, and that
$\xi' = A(\xi) A(1)^{-1} \in \cO_n$; that is, our reasoning is not
circular). If $p \ts | \ts m$, we get
$(\xi')^{m / p} \equiv (a_q^{n / m})^{m / p}_{\phantom{q}} \not\equiv
1 \mod{\fp}$, as $\ord \bigl( a_q^{n / m} \bmod \ell \bigr) = m$ by
our choice of $a_q$.  Thus, $\xi'$ is not an $(m / p)$-th root of
unity, and we conclude that $\xi' = A (\xi) A (1)^{- 1}$ must be a
primitive $m$-th root of unity.
\end{proof}

Next, we need a technical divisibility property that follows from
previous work by Mann \cite{Mann}; see also the introduction of
\cite{Zan}.

\begin{fact}\label{fact:linear_comb_of_roots_of_1}
  Let\/ $\xi^{}_n$ be a primitive\/ $n$-th root of unity, and suppose
  that
\[ 
   \alpha^{}_0 + \alpha^{}_1 \xi_n^{n_1} + \cdots 
   + \alpha^{}_{k - 1} \xi_n^{n_{k - 1}} \, = \, 0 \ts , 
\]
with rational coefficients\/ $\alpha_i$ and integer exponents\/
$n_j$. If no proper subsum vanishes, that is,
$\sum_{j \in B} \alpha^{}_{j} \xi_{n}^{n_j} \neq 0$ for every\/
$B \subsetneq \{ 0, 1, \ldots, k - 1 \}$, one has
\[ 
    \frac{n}{\gcd (n, n^{}_1, \ldots, n^{}_{k - 1})} \, \mathrel{\bigg|} 
   \prod_{p \mid k}    p \ts ,
\]
 where the product runs over all prime divisors of\/ $k$.  \qed
\end{fact}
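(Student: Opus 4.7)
The plan is to follow Mann's original argument in \cite{Mann}. First I normalise: set $d = \gcd(n, n^{}_1, \ldots, n^{}_{k-1})$, $N = n/d$, and $\zeta = \xi^{d}_{n}$, which is a primitive $N$-th root of unity. Dividing all exponents by $d$ rewrites the hypothesis as a minimal vanishing sum
\[
    \alpha^{}_0 + \alpha^{}_1 \zeta^{m^{}_1} + \cdots
    + \alpha^{}_{k-1} \zeta^{m^{}_{k-1}} \, = \, 0
\]
with $m^{}_0 = 0$ and $\gcd(N, m^{}_1, \ldots, m^{}_{k-1}) = 1$. The claim reduces to the twin assertions that $N$ is squarefree and that every prime dividing $N$ also divides $k$. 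Note that minimality also forces the $m^{}_j$ to be pairwise distinct modulo $N$ and every $\alpha^{}_j$ to be non-zero.

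Squarefreeness follows from Galois averaging. Suppose $p^2 \mid N$ for some prime $p$. Then $H^{}_p := \Gal\bigl(\QQ(\zeta)/\QQ(\zeta^p)\bigr)$ is cyclic of order $p$, with a generator acting on $\zeta$ by multiplication by a primitive $p$-th root of unity $\omega$. Summing the images of the relation over all $p$-th roots of unity $\omega$, and using that $\sum_{\omega^p = 1} \omega^{m^{}_j}$ equals $p$ when $p \mid m^{}_j$ and $0$ otherwise, one concludes that the partial sum $\sum_{p \mid m^{}_j} \alpha^{}_j \zeta^{m^{}_j}$ vanishes. Minimality then forces the index set $\{ j : p \mid m^{}_j \}$ to be either empty, contradicting $m^{}_0 = 0$, or the full set $\{ 0, 1, \ldots, k-1 \}$, contradicting $\gcd(N, m^{}_1, \ldots, m^{}_{k-1}) = 1$. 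Hence $p^2 \nmid N$, so $N$ is squarefree.

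For the divisibility $p \mid k$ when $p \mid N$, write $N = p N'$ with $\gcd(p, N') = 1$ and abbreviate $r^{}_j := m^{}_j \bmod p$. Using the $\QQ(\xi^{}_{N'})$-basis $\{1, \xi^{}_p, \ldots, \xi_p^{p-2}\}$ of $\QQ(\zeta) = \QQ(\xi^{}_p, \xi^{}_{N'})$ and the identity $\Phi^{}_p(\xi^{}_p) = 0$, one finds that all $p$ residue-class partial sums $B^{}_r := \sum_{r^{}_j = r} \alpha^{}_j \xi_{N'}^{m^{}_j}$, for $r = 0, 1, \ldots, p-1$, share a common value $B$ in $\QQ(\xi^{}_{N'})$. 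Minimality forces $B \neq 0$, since otherwise each $B^{}_r = 0$ would give a proper vanishing subsum $\sum_{r^{}_j = r} \alpha^{}_j \zeta^{m^{}_j} = \xi_p^r B^{}_r = 0$, exactly as in the previous paragraph. Thus every residue class modulo $p$ is non-empty. Applying Mann's theorem inductively on $N$ to the smaller-modulus $\QQ(\xi^{}_{N'})$-valued relations $B^{}_r - B^{}_0 = 0$, together with the pairwise distinctness of the $m^{}_j$ modulo $N$, forces each residue class modulo $p$ to contain exactly $k/p$ elements, and hence $p \mid k$. The main obstacle is this last step, where one must verify that the relations $B^{}_r - B^{}_0 = 0$ decompose into minimal subsums to which the inductive hypothesis applies; this combinatorial bookkeeping is delicate and is carried out in full in \cite{Mann}, to which I would ultimately appeal.
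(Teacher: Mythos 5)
Your write-up follows Mann's argument, and the parts you carry out in detail are sound: the normalisation to a primitive $N$-th root of unity with coprime exponents, the Galois-averaging proof that $N$ must be squarefree, and the deduction (via the $\QQ(\xi^{}_{N'})$-basis $1, \xi^{}_p, \ldots, \xi_p^{p-2}$) that the $p$ residue-class sums $B^{}_r$ share a common non-zero value, whence every residue class modulo $p$ is non-empty. Note that the paper itself offers no proof of this Fact --- it is quoted from Mann --- so the only meaningful comparison is with Mann's theorem.

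The genuine gap is your final step. ``Every residue class modulo $p$ is non-empty'' yields $k \geqslant p$, i.e.\ $p \leqslant k$, and together with squarefreeness this is exactly Mann's conclusion: $N$ divides $\prod_{p \leqslant k} p$, the product over primes \emph{up to} $k$. Your attempted upgrade to ``each class has exactly $k/p$ elements, hence $p \mid k$'' is false, and no induction on the relations $B^{}_r - B^{}_0 = 0$ can deliver it. The minimal vanishing sum $1 - \xi^{}_6 + \xi_6^2 = 0$ (that is, $\Phi^{}_6(\xi^{}_6)=0$) has $k=3$ terms, $n=6$, exponents $0,1,2$ with $\gcd(6,1,2)=1$, so $N=6$; the classes modulo $p=2$ have sizes $2$ and $1$, not $3/2$. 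The same example shows that the Fact as printed, with $\prod_{p \mid k} p$, is itself false ($6 \nmid 3$); it is evidently a misprint for $\prod_{p \leqslant k} p$, as the paper's own application confirms: for four-term sums it invokes the divisor $6 = 2\cdot 3 = \prod_{p \leqslant 4} p$, whereas $\prod_{p \mid 4} p = 2$. So stop one step earlier --- at ``every class is non-empty, hence $p \leqslant k$'' --- and you have a complete proof of the statement the paper actually uses and that Mann actually proved; the last paragraph of your proposal is chasing a strengthening that does not hold.
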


Below, we shall need the result from
Fact~\ref{fact:linear_comb_of_roots_of_1} for sums of four terms,
where we then get that $n/\!\gcd(n, n^{}_{1}, n^{}_{2}, n^{}_{3})$ is
a divisor of $6$.

\begin{lemma}\label{lem:preserving_cyclotomic_base}
  Assume\/ $A (W_k) = W_k$ with\/ $A(1) = 1$. Let\/
  $p \ts\ts | \ts\ts n$ be a prime factor of\/ $n$ and write
  $n = p^a n'$ with\/ $p \nmid n'$ and\/ $a\in\NN$.  Let
  $\xi^{}_{p^a}$ be a primitive $p^a$-th root of unity.  If\/
  $A \bigl( {\xi^{}_{p^a}} \bigr) = \xi^{}_{p^a}$, one has\/
  $A \bigl(\xi_{p^a}^i \bigr) = \xi_{p^a}^i$ for all\/
  $0 \leqslant i < p^a$.
\end{lemma}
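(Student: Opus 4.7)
The plan is as follows. Write $\xi = \xi_{p^a}$. By Lemma \ref{lem:preservation_of_roots_of_1} combined with the hypothesis $A(1) = 1$, there is a valuation-preserving permutation $c$ of $\ZZ/p^a\ZZ$ with $A(\xi^i) = \xi^{c(i)}$, satisfying $c(0) = 0$ and $c(1) = 1$; the task is to show that $c$ is the identity.

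First, I will exploit the $\ZZ$-linear relations coming from $\Phi_{p^a}(\xi) = 0$ and its translates $\xi^j \ts \Phi_{p^a}(\xi) = 0$, which, after applying $A$, yield
\[
   \sum_{\ell=0}^{p-1} \xi^{c(j + \ell p^{a-1})} \, = \, 0
\]
for every $j \in \ZZ$. Since the minimal vanishing sums of $p^a$-th roots of unity are precisely those indexed by cosets of $\langle p^{a-1} \rangle$ in $\ZZ/p^a\ZZ$ (as follows from Fact~\ref{fact:linear_comb_of_roots_of_1}), each such identity forces the set $\{c(j + \ell p^{a-1})\}_{\ell}$ to equal a coset of $\langle p^{a-1} \rangle$. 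Together with $c(1) = 1$ and the preservation of $p$-adic valuation, this means $c$ must preserve the coset $1 + \langle p^{a-1}\rangle$ as well as its companions. Iterating the same reasoning down the filtration $\langle p^{a-1}\rangle \supset \langle p^{a-2}\rangle \supset \cdots \supset \langle 1\rangle$ — by re-applying Lemma~\ref{lem:preservation_of_roots_of_1} to each smaller-order root $\xi_{p^{a-b}}$ that arises — reduces the problem to pinning down $c$ inside each individual coset.

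To resolve the remaining ambiguity within cosets (and to handle the base case $a=1$, where Mann's theorem provides no non-trivial constraint because any permutation of the primitive $p$-th roots preserves the sum $1 + \xi_p + \cdots + \xi_p^{p-1}$), I will invoke the hypothesis $A(W_k) = W_k$ via the auxiliary-prime strategy of Lemma~\ref{lem:preservation_of_roots_of_1}. For each $i$ with $c(i) \neq i$, I will consider elements of the form $\xi^i - a_q^{n/p^a} \ts \xi \in V_2 \setminus W_k$ for suitable $q \in S_n$ and $a_q$ satisfying (H1)–(H3), apply $A$, and compare ideal factorisations in $\cO_n$. For infinitely many split primes $\ell \in S_{p^a}$ with corresponding prime ideals $\fp \mid \ell$, this yields the congruences $\xi^{c(i)} \equiv \xi^i \pmod{\fp}$, which then force the equality $\xi^{c(i)} = \xi^i$ in $\cO_n$ by the Dedekind-domain argument used at the end of the proof of Lemma~\ref{lem:preservation_of_roots_of_1}. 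The main obstacle will be the bookkeeping in this final step: coordinating the choices of $q$ and $a_q$ so that the resulting congruences distinguish $\xi^i$ from the other $p-1$ elements of its coset of $\langle p^{a-1}\rangle$, and verifying that this argument applies uniformly across all $i$, including those with $\gcd(i,p) > 1$ that correspond to primitive $p^{a-b}$-th roots for $b \geqslant 1$.
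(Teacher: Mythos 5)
Your reduction to an exponent permutation $c$ (via Lemma~\ref{lem:preservation_of_roots_of_1}) and the observation that the relations $\xi^{j}\ts\Phi_{p^a}(\xi)=0$ force $c$ to permute the cosets of $\langle p^{a-1}\rangle$ are both sound, but they carry little weight: for $a=1$ the coset constraint is vacuous, so everything rests on your second step, and that step has a genuine gap. From $x=\xi^i-a_q^{n/p^a}\xi=\xi\bigl(\xi^{i-1}-a_q^{n/p^a}\bigr)\in V_k\setminus W_k$ and $A(W_k)=W_k$ you may conclude that $A(x)=\xi^{c(i)}-a_q^{n/p^a}\xi$ lies in \emph{some} prime ideal $\fp$ above \emph{some} rational prime $\ell$ that already divides $x$ --- Lemma~\ref{lem:coprimality_lemma} only controls the rational primes below, not the prime ideals themselves. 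Since $\ell$ splits completely, there are $\varphi(p^a)$ candidates $\fp^{}_{p^a,j}=(\ell,\xi^{j}-a_q^{n/p^a})$, and nothing in your setup forces $\fp$ to be the one with $j=i-1$. What you actually obtain is $\xi^{c(i)}\equiv\xi^{j+1}\bmod\fp$ for an \emph{undetermined} $j\in(\ZZ/p^a\ZZ)^{\times}$; the ``infinitely many primes'' argument then only yields $c(i)=j+1$ for some fixed but unknown $j$, which is essentially no constraint (for $a=1$ it merely repeats $c(i)\neq 1$, already known from injectivity). This is not a bookkeeping issue: the two-term test element is symmetric under replacing $i$ by $c(i)$, so it cannot distinguish them.

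The paper's proof closes exactly this hole. It argues by induction on $i$ and tests $A$ on the \emph{product} $(\xi^{}_{p^a}-1)\bigl(\xi_{p^a}^{i}-a_q^{n/p^{a-b}}\bigr)$, where $i=p^b i^{}_0$ with $p\nmid i^{}_0$. Expanding by linearity and using the inductive hypothesis $A(\xi^h)=\xi^h$ for $h\leqslant i$, one obtains a \emph{four}-term vanishing sum $\xi^{r}-\xi^{i}-\xi^{p^b j+1}+\xi^{p^b j}=0$ in which the unknown $j$ appears in two consecutive exponents while $\xi^i$ is already pinned down. Fact~\ref{fact:linear_comb_of_roots_of_1} (Mann's theorem) then forces a pairwise cancellation pattern that simultaneously determines $j$ (namely $p^b j\equiv i$) and gives $r=i+1$; the spurious alternative for $p=2$ is excluded by injectivity of $A$, and the exceptional case $p^a\mid 6$ is handled directly. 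You would need to replace your two-term element by something with this self-locating structure (or an equivalent device) for the within-coset step --- and hence the whole lemma --- to go through.
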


\begin{proof}
  We prove this by induction on $i$; the result is evidently true for
  $i = 0$ and $i = 1$ by our assumptions. So, suppose that
  $A \bigl(\xi_{p^a}^{h} \bigr) = \xi_{p^a}^{h}$ for all
  $0 \leqslant h \leqslant i$ and that $i = p^b i^{}_0$ with
  $p \nmid i^{}_0$ and $b < a$. A simple computation gives
\[ 
    \xi_{p^a}^i \, = \ts \bigl( \xi_{p^a}^{p^b} \bigr)^{i^{}_0}
    \, = \, \xi_{p^{a - b}}^{i_0} , 
\]
and $\xi_{p^a}^i$ thus is a primitive $p^{a - b}$-th root of unity. If
$p^{a - b} = 2$, we must have that $\xi_{p^a}^i = - 1$, as
$2 \nmid i^{}_0$.  This implies $\xi_{p^a}^{i + 1} = - \xi_{p^a}$, and
thus
$A \bigl( \xi_{p^a}^{i + 1} \bigr) = - A \bigl( \xi_{p^a} \bigr) = -
\xi_{p^a} = \xi_{p^a}^{i + 1}$, which is the desired result. We can
thus assume $p^{a - b} > 2$ in what follows.

As before, fix a prime $q \in S_n$ and choose $a_q \in \ZZ$ as above.
By Lemma~\ref{lem:factors_of_a_q_primepower}, we obtain that
$\xi_{p^a}^i - a_q^{n / p^{a - b}} = \xi^{i_0}_{p^{a - b}} - a_q^{n /
  p^{a - b}} \in V_2$, and this number must be coprime with
$\xi_{p^a} - 1$. Note that $p \ts \ZZ [\xi_{p^a}]$ is an ideal in
$\ZZ [\xi_{p^a}]$ that factors as
$(\xi_{p^a} - 1)^{p^{a - 1} (p - 1)}$, and that
$\QQ (\xi^{}_n) \nts / \ts \QQ (\xi^{}_{p^a})$ is unramified above
$p$.  This implies that $\xi_{p^a} - 1 \in V_2$ (in fact, it also
belongs to $W_2$). Hence,
$(\xi_{p^a} - 1) \bigl( \xi_{p^a}^i - a_q^{n / p^{a - b}} \bigr) \in
V_2 \subseteq V_k$ as well, though it does not belong to $W_k$.

Choose some $r \in \ZZ/ p^a \ZZ$ such that
$A \bigl( \xi_{p^a}^{i + 1} \bigr) = \xi_{p^a}^r$; such an $r$ exists
by Lemma~\ref{lem:preservation_of_roots_of_1}. Since $A (W_k) = W_k$,
there is an $\ell \in S_{p^{a-b}}$ such that
$A \bigl((\xi_{p^a} - 1) \bigl(\xi_{p^a}^i - a_q^{n / p^{a -
    b}}\bigr)\bigr) \in \fp$, where the prime ideal $\fp$ divides
$\bigl( \ell, \xi_{p^{a - b}}^j - a_{q^{\vphantom{i}}}^{n / p^{a - b}}
\bigr)$ for some $j \in (\ZZ/ p^{a - b} \ZZ)^{\times}$. By linearity
and our inductive assumptions, we get
\begin{align*}
  A \bigl((\xi_{p^a} - 1) \bigl(\xi_{p^a}^i - a_q^{n / p^{a - b}} 
  \bigr) \bigr) \, & = \, 
    A \bigl(\xi_{p^a}^{i + 1} - \xi_{p^a}^i - a_q^{n / p^{a - b}}
    \xi_{p^a} + a_q^{n / p^{a - b}} \bigr) \\
  & =  \, \xi_{p^a}^r - \xi_{p^a}^i - a_q^{n / p^{a - b}} \xi_{p^a}
    + a_q^{n / p^{a - b}}\\
  & \equiv \,  \xi_{p^a}^r {- \xi_{p^a}^i}  - \xi_{p^a}^{p^b j + 1}
    + \xi_{p^a}^{p^b j}  \, \equiv \,  0 \mod{\fp} \ts .
\end{align*}
By varying $q$, there are infinitely many possible choices for $\fp$,
but only finitely many for $j$; hence, there is a fixed
$j \in (\ZZ/ p^{a - b} \ZZ)^{\times}$ for which the module equivalence
above holds for infinitely many prime ideals $\fp$, and is thus an
actual equality,
\[ 
     \xi_{p^a}^r - \xi_{p^a}^i - \xi_{p^a}^{p^b j + 1} +
     \xi_{p^a}^{p^b j} \, = \, 0 \ts . 
\]
Note also that $p^b j$ and $p^b j + 1$ are coprime.  By
Fact~\ref{fact:linear_comb_of_roots_of_1}, if no proper subsum of the
above equality vanishes, we must have $p^a \mid 6$, and $p$ is either
$2$ or $3$. If $p = 2$, then $4$ must divide $p^a$ (as, in this
situation, $a \geqslant 2$ since $\xi_2 = - 1$) and thus divide $6$ in
turn, a contradiction. Hence, we must have $p = p^a = 3$, but in this
case we only have to verify that $A \bigl(\xi_3^2\bigr) = \xi_3^2$ as
$i > 1$ in the inductive step. We can verify the last equality from
$1 + \xi^{}_3 + \xi_3^2 = 0$ and linearity.
	
Consequently, we may now assume $p^a \nmid 6$, so that some subsum of
the above equality vanishes. As all four terms are non-zero, there
must be two summands that cancel each other (and the other two then
cancel each other as well); otherwise, if there was a null subsum with
three summands, the remaining term would also be $0$.  If
$r \equiv i \mod{p^a}$, so that $\xi_{p^a}^r - \xi_{p^a}^i = 0$, we
must have $p^b j + 1 \equiv p^b j \mod{p^a}$ or, equivalently,
$1 \equiv 0 \mod{p^a}$, which is absurd. The only remaining
alternatives are that $\xi_{p^a}^r$ cancels out with either
$- \xi_{p^a}^{p^b j + 1}$ or $\xi_{p^a}^{p^{b } j}$.

If $p$ is an odd prime, there is no power of $\xi^{}_{p^a}$ that
equals $- \xi_{p^a}^r$, which implies
$\xi_{p^a}^r - \xi_{p^a}^{p^b j + 1} = 0$.  This gives
$r \equiv p^b j + 1$ and $ i \equiv p^b j \mod{p^a}$, and hence
\[
        r \equiv i + \nts 1 \, \bmod{p^a}  \; \Longrightarrow  \; 
        A (\xi_{p^a}^{i + 1}) = \ts \xi_{p^a}^r  = \ts \xi_{p^a}^{i + 1} 
\]
as desired. When $p = 2$, we have a third possible case, namely,
$\xi_{2^a}^r + \xi_{2^a}^{2^b j} = 0$. Since
$\xi_{2^a}^{2^{a - 1}} = \xi_2 = - 1$, this implies
$r \equiv 2^b j + 2^{a - 1} \bmod 2^a$ and
$i \equiv 2^{a - 1} + 2^b j + 1 \mod{2^a}$ and thus
$i \equiv r + 1 \mod{2^a}$, so
$A (\xi_{2^a}^{i + 1}) = \xi_{2^a}^{i - 1}$.  But
$A \bigl(\xi_{2^a}^{i - 1}\bigr) = \xi_{2^a}^{i - 1}$ by our inductive
hypothesis. This is a contradiction because $A$ is a bijection; hence,
this third scenario is impossible.
\end{proof}

The following can be seen as a partial extension of the previous result:

\begin{lemma}\label{lem:preserving_cyclotomic_base_more_factors}
  Assume that\/ $A (W_k) = W_k$ and\/ $A (1) = 1$. Let\/
  $p \ts\ts | \ts\ts n$ be prime, write\/ $n = p^a n'$ with\/
  $p \nmid n'$, and let\/ $m$ be any divisor of\/ $n'$. Assume\/
  $A (\xi^{}_m) = \xi^{}_m$ and\/ $A (\xi^{}_{p^a}) =
  \xi^{}_{p^a}$. Then, one has\/
  $A\bigl( \xi_{p^a}^i \, \xi^{}_m \bigr) = \xi_{p^a}^i \, \xi^{}_m$
  for any\/ $0 \leqslant i < p^a$.
\end{lemma}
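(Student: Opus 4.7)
The plan is to follow the template of Lemma~\ref{lem:preserving_cyclotomic_base}, replacing the element $\xi^{i}_{p^a}$ by the product $\xi^{i}_{p^a} \xi^{}_{m}$ throughout, and to induct on $i$. The base case $i = 0$ is the hypothesis $A(\xi^{}_{m}) = \xi^{}_{m}$. For the inductive step, assume $A(\xi^{j}_{p^a} \xi^{}_{m}) = \xi^{j}_{p^a} \xi^{}_{m}$ for all $0 \leqslant j \leqslant i$, with the goal to establish the claim for $i + 1$. A preliminary reduction handles the degenerate cases in which the order $M$ of $\zeta \defeq \xi^{i}_{p^a} \xi^{}_{m}$ satisfies $M \leqslant 2$ or $M \equiv 2 \bmod 4$: one reduces via $A(\xi^{}_{m/2}) = -A(\xi^{}_{m}) = -\xi^{}_{m} = \xi^{}_{m/2}$ (valid when $m \equiv 2 \bmod 4$) and linearity of $A$ to smaller parameters already covered by Lemma~\ref{lem:preserving_cyclotomic_base} and the inductive hypothesis. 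We may thus assume $M \geqslant 3$ and $M \not\equiv 2 \bmod 4$, making Lemma~\ref{lem:factors_of_a_q_primepower} applicable to $\zeta$, viewed as a primitive $M$-th root of unity.

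Fix a prime $q \in S_n$ and choose $a^{}_q \in \ZZ$ satisfying $(H1)$--$(H3)$. Lemma~\ref{lem:factors_of_a_q_primepower} yields $\zeta - a_q^{n/M} \in V_2 \setminus W_2$, and all its prime ideal divisors lie above rational primes $\ell \in S_M$ with $\ell \nmid n$. Since $\xi^{}_{p^a} - 1$ is divisible only by a prime ideal above the ramified prime $p$, the two factors are coprime, so
\[
  \alpha^{}_{i} \, \defeq \, (\xi^{}_{p^a} - 1) \bigl( \zeta - a_q^{n/M} \bigr)
  \, = \, \xi^{i+1}_{p^a} \xi^{}_{m} - \xi^{i}_{p^a} \xi^{}_{m}
  - a_q^{n/M} \xi^{}_{p^a} + a_q^{n/M}
\]
belongs to $V_2 \subseteq V_k$, but not to $W_k$. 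Applying $A$ and using $A(1) = 1$, $A(\xi^{}_{p^a}) = \xi^{}_{p^a}$ and the inductive hypothesis gives
\[
  A(\alpha^{}_{i}) \, = \, A \bigl( \xi^{i+1}_{p^a} \xi^{}_{m} \bigr)
  - \xi^{i}_{p^a} \xi^{}_{m} - a_q^{n/M} \xi^{}_{p^a} + a_q^{n/M} \ts .
\]
By Proposition~\ref{prop:AW=W}, $A$ bijects $V_k$ and $W_k$ individually, whence $A(\alpha^{}_{i}) \in V_k \setminus W_k$ is divisible by some prime ideal $\fp \nmid n$. By Lemma~\ref{lem:preservation_of_roots_of_1}, $A(\xi^{i+1}_{p^a} \xi^{}_{m})$ is a primitive root of unity of the same order as $\xi^{i+1}_{p^a} \xi^{}_{m}$; since $\gcd(p^a, m) = 1$, the CRT yields a unique representation $A(\xi^{i+1}_{p^a} \xi^{}_{m}) = \xi^{r}_{p^a} \xi^{s}_{m}$.

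Reducing the displayed identity modulo $\fp$ and writing $a_q^{n/M} \equiv \xi^{c_1}_{p^a} \xi^{c_2}_{m} \pmod{\fp}$ via the CRT decomposition, the argument of Lemma~\ref{lem:preserving_cyclotomic_base}---varying $q$ while the finite data $(c_1, c_2)$ stabilise along a subsequence---upgrades the family of congruences to the genuine identity
\[
  \xi^{r}_{p^a} \xi^{s}_{m} - \xi^{i}_{p^a} \xi^{}_{m}
  - \xi^{c_1 + 1}_{p^a} \xi^{c_2}_{m} + \xi^{c_1}_{p^a} \xi^{c_2}_{m}
  \, = \, 0
\]
in $\cO_n$. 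The main obstacle is the combinatorial analysis via Fact~\ref{fact:linear_comb_of_roots_of_1}: if no proper subsum vanishes, then $n / \gcd(n, n^{}_1, n^{}_2, n^{}_3) \mid 6$, leaving small exceptional cases with $p \in \{2, 3\}$ and $p^a \mid 6$ treated ad hoc as at the end of the proof of Lemma~\ref{lem:preserving_cyclotomic_base}. Otherwise, one of the three possible pairings must cancel: the first pairing forces $\xi^{}_{p^a} = 1$, which is absurd; the third pairing, after introducing the sign via $-1 = \xi^{p^a/2}_{p^a}$ or the analogous factor for $m$, forces $A(\xi^{i+1}_{p^a} \xi^{}_{m}) = \xi^{j}_{p^a} \xi^{}_{m}$ for some $j \leqslant i$, contradicting the inductive hypothesis combined with the injectivity of $A$ (unless $p^a \leqslant 2$, which is covered by the preliminary reduction). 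Hence only the second pairing remains, which directly gives $r \equiv i + 1 \bmod p^a$ and $s \equiv 1 \bmod m$, completing the induction.
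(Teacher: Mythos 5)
Your argument is correct in outline, but it takes a genuinely different route from the paper's. The paper does \emph{not} induct on $i$ here: it first invokes Lemma~\ref{lem:preserving_cyclotomic_base} to know $A(\xi_{p^a}^i)=\xi_{p^a}^i$ for all $i$, and then tests $A$ on the two auxiliary elements $\xi_{p^a}^i\bigl(\xi^{}_m - a_q^{n/m}\bigr)$ and $\xi^{}_m\bigl(\xi_{p^a}^i - a_q^{n/p^{a-b}}\bigr)$. Because the coefficient of $a_q$ in each case is a root of unity already known to be fixed by $A$, each test produces a congruence between just \emph{two} roots of unity modulo $\fp$, which upgrades to an equality directly via \cite[Lemma~2.12]{W} (no need to vary $q$ or to invoke Mann's theorem), and the two resulting congruences $r\equiv i \bmod p^a$ and $r\equiv 1\bmod m$ are combined by the CRT. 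Your approach instead recycles the single element $(\xi^{}_{p^a}-1)(\xi_{p^a}^i\xi^{}_m - a_q^{n/M})$ from the proof of Lemma~\ref{lem:preserving_cyclotomic_base}, which forces an induction on $i$, a four-term vanishing sum, and a second pass through Fact~\ref{fact:linear_comb_of_roots_of_1} with its exceptional cases $p^a\mid 6$ (these do work out: the two terms $\xi_{p^a}^{c_1+1}\xi_m^{c_2}$ and $\xi_{p^a}^{c_1}\xi_m^{c_2}$ force the gcd in Mann's bound to divide $m$, so $p^a\mid 6$ follows, and $p^a=3$ is handled by $(1+\xi^{}_3+\xi_3^2)\ts\xi^{}_m=0$). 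Your route is more uniform with the preceding lemma but strictly heavier; the paper's is shorter precisely because it exploits the already-established fixed points to keep every congruence binary.

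One step of your case analysis needs repair: in the inductive step $i=0\to 1$, your refutation of the third pairing claims $A(\xi^{}_{p^a}\xi^{}_m)=\xi_{p^a}^{\ts j}\xi^{}_m$ with $j\leqslant i$, but there $j\equiv -1\equiv p^a-1 \bmod p^a$, which is not covered by the inductive hypothesis, so injectivity gives no contradiction. The step can be saved: for $i=0$ one has $M=m$, so $a_q^{n/m}\bmod\fp$ is an $m$-th root of unity with no $\xi^{}_{p^a}$-component (i.e.\ $c^{}_1=0$), and the third pairing would equate $\xi^{}_m$ (of order prime to $p$) with $-\xi^{}_{p^a}\xi_m^{c_2}$ (of order divisible by $p^a>2$), which is impossible. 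With that patch, and with the observation that for $i\geqslant 1$ the third pairing indeed yields $A(\xi_{p^a}^{i+1}\xi^{}_m)=\xi_{p^a}^{i-1}\xi^{}_m$ contradicting injectivity, your proof goes through.
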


\begin{proof}
  If $m=1$, this follows immediately from
  Lemma~\ref{lem:preserving_cyclotomic_base}.  We now assume, without
  loss of generality, that $m \not\equiv 2 \mod{4}$ and $m>1$.

  Lemma~\ref{lem:preserving_cyclotomic_base} shows us that
  $A (\xi_{p^a}^i) = \xi_{p^a}^i$ for all $0 \leqslant i < p^a$.  Fix a
  prime $q \in S_n$ and choose an $a_q \in \ZZ$ as above.  We know by
  Lemma~\ref{lem:factors_of_a_q_primepower} that
\[ 
    \xi_{p^a}^i \bigl( \xi^{}_m - a_q^{n / m} \bigr) , \, 
    \xi^{}_m \bigl(\xi_{p^a}^i - a_q^{n / p^{a - b}} \bigr) \in V_2 
    \subseteq V_k  \quad \text{when } i = p^b i^{}_0 \text{ and } 
    p \nmid i^{}_0  \ts . 
\]
Let $A (\xi_{p^a}^i \xi_m) = (\xi_{p^a} \xi_m)^r$, where $r$ is some
element of $\ZZ/ p^a m\ZZ$; note that this $r$ must exist as a
consequence of Lemma~\ref{lem:preservation_of_roots_of_1}.  Now, by
Lemmas~\ref{lem:coprimality_lemma}
and~\ref{lem:factors_of_a_q_primepower}, we also know that
$A \bigl(\xi_{p^a}^i (\xi^{}_m - a_q^{n / m})\bigr)$ belongs to some
prime ideal $\fp$ dividing
${\xi^{j}_{m\vphantom{t}}} - a_{q\vphantom{t}}^{n / m}$ for some
$j \in (\ZZ/r\ZZ)^{\times}$, hence
\[
  A (\xi_{p^a}^i \xi^{}_m - \xi_{p^a}^i a_q^{n / m}) \, = \, \xi_{p^a}^r
  \xi_m^r - \xi_{p^a}^i a_q^{n / m} \, \equiv \, \xi_{p^a}^r \xi_m^r -
  \xi_{p^a}^i \xi_m^j \, \equiv \, 0 \quad \mod{\fp} \ts .
\]
The last congruence must be an actual equality, that is,
\[ 
      \xi_{p^a}^r \xi_m^r \, = \, \xi_{p^a}^i \xi_m^j \ts . 
\]
This can be seen by varying $q$ as in
Lemma~\ref{lem:preserving_cyclotomic_base} or, more directly, via
\cite[Lemma~2.12]{W}.  As $p \nmid m$, we must have
$r \equiv i \mod{p^a}$.  Likewise, by the same reasoning,
$A \bigl(\xi^{}_m (\xi_{p^a}^i - a_q^{n / p^{a-b }})\bigr)$ must
belong to some prime ideal $\fp'$ dividing
$\xi_{p^{a - b}}^j - a_q^{n / p^{a - b}}$, for some $j$ in
$(\ZZ/ p^{a - b} \ZZ)^{\times}$, so
\[
  A \bigl( \xi^{}_m \xi_{p^a}^i - \xi^{}_m a_q^{n / p^{a-b}} \bigr) \, = \,
  \xi_{p^a}^r \xi_m^r - \xi^{}_m a_q^{n / p^{a - b}} \ts \equiv \,
  \xi_{p^a}^r \xi_m^r - \xi_{p^a}^{p^b j} \xi^{}_m \, \equiv \, 0
  \mod{\fp'}.
\]
By the same arguments, the last congruence becomes a proper equality,
implying $r \equiv 1 \mod{m}$. We conclude that
$\xi_{p^a}^r = \xi_{p^a}^i$ and $\xi_m^r = \xi^{}_m$; together, these
two equalities show that we indeed have
$A (\xi_{p^a}^i \xi^{}_m) = \xi_{p^a}^r \xi^{r}_m = \xi_{p^a}^i
\xi^{}_m$.
\end{proof}

At this point, we can state a key result of this section as follows.

\begin{prop}\label{prop:structure_of_A}
  If\/ $A$ is a\/ $\ZZ$-linear bijection of\/ $\cO_n$ with\/
  $A (W_k) = W_k$, one has\/ $A = M_{\varepsilon} \circ \sigma$ for
  some $\varepsilon \in \cO^{\times}_{n}$ and\/
  $\sigma \in \Gal (K/\QQ)$.
\end{prop}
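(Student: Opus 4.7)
The plan is to successively absorb a unit $\varepsilon$ (via $M_{\varepsilon}$) and a Galois element $\sigma$ so that the remainder $B \defeq \sigma^{-1} \circ M_{\varepsilon^{-1}} \circ A$ fixes enough roots of unity for the earlier lemmas to force $B = \id$. Since all the technical work has already been done in Lemmas~\ref{lem:preservation_of_roots_of_1}, \ref{lem:preserving_cyclotomic_base}, and \ref{lem:preserving_cyclotomic_base_more_factors}, what remains here is essentially an assembly.

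First, I would set $\varepsilon \defeq A(1)$, which lies in $\cO_{n}^{\times}$ by Lemma~\ref{lem:preservation_of_roots_of_1}(1). Multiplication by a unit preserves the valuation $v^{}_{\fp}$ at every prime ideal $\fp$, hence preserves $W^{}_{\nts k}$; so $A' \defeq M_{\varepsilon^{-1}} \circ A$ is a $\ZZ$-linear bijection on $\cO_n$ with $A'(W^{}_{\nts k}) = W^{}_{\nts k}$ and $A'(1) = 1$.

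Next, for each prime $p \mid n$, Lemma~\ref{lem:preservation_of_roots_of_1}(2) yields some $s^{}_{p} \in (\ZZ / p^{a_p} \ZZ)^{\times}$ with $A'(\xi^{}_{p^{a_p}}) = \xi^{s_p}_{p^{a_p}}$. The Chinese remainder isomorphism \eqref{eq:cyclotomic_Galois_group-2} then gives a unique $r \in (\ZZ / n \ZZ)^{\times}$ with $r \equiv s^{}_{p} \bmod p^{a_p}$ for every $p \mid n$; let $\sigma = \sigma^{}_{r} \in \Gal(K/\QQ)$ via \eqref{eq:cyclotomic_Galois_group}. Using $\xi^{}_{p^{a_p}} = \xi^{n/p^{a_p}}_{n}$, a short computation shows $\sigma(\xi^{}_{p^{a_p}}) = A'(\xi^{}_{p^{a_p}})$ for every $p \mid n$. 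Since Galois automorphisms permute the prime ideals of $\cO_n$ (mapping ramified primes to ramified primes) and preserve valuations, $\sigma^{-1}$ preserves $W^{}_{\nts k}$, hence so does $B \defeq \sigma^{-1} \circ A'$; by construction, $B(1) = 1$ and $B(\xi^{}_{p^{a_p}}) = \xi^{}_{p^{a_p}}$ for every $p \mid n$. The delicate point here is the \emph{simultaneous} CRT matching: without it, one would be able to absorb only a single cyclic factor of $\Gal(K/\QQ)$ at a time rather than the whole Galois group.

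Finally, writing $n = p_{1}^{a_1} \cdots p_{s}^{a_s}$, I would show by induction on $r \in \{1, \ldots, s\}$ that $B$ fixes every product of the form $\prod_{j=1}^{r} \xi_{p_{j}^{a_j}}^{i_j}$ with $0 \leqslant i^{}_{j} < p_{j}^{a_j}$. The base case $r=1$ is immediate from Lemma~\ref{lem:preserving_cyclotomic_base}. For the inductive step, one writes such a product as $\eta \cdot \xi^{i_r}_{p_{r}^{a_r}}$, where $\eta = \prod_{j<r} \xi_{p_{j}^{a_j}}^{i_j}$ is fixed by $B$ by the inductive hypothesis and is a primitive $m'$-th root of unity for some $m' \mid n / p_{r}^{a_r}$; Lemma~\ref{lem:preserving_cyclotomic_base_more_factors}, applied with $p = p^{}_{r}$, $a = a^{}_{r}$, $m = m'$ and the role of $\xi^{}_{m}$ played by $\eta$, then delivers the claim for every $0 \leqslant i^{}_{r} < p_{r}^{a_r}$. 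Taking $r = s$, the resulting fixed set exhausts the group $\mu^{}_{n}$ of $n$-th roots of unity via the CRT decomposition $\mu^{}_{n} \simeq \prod_{p \mid n} \mu^{}_{p^{a_p}}$, and in particular contains the $\ZZ$-basis $\{1, \xi^{}_{n}, \ldots, \xi^{d-1}_{n}\}$ of $\cO_n$. By $\ZZ$-linearity, $B = \id$, and therefore $A = M^{}_{\varepsilon} \circ \sigma$ as claimed.
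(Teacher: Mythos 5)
Your proposal is correct and follows essentially the same route as the paper's own proof: normalise $A(1)=1$ by absorbing the unit, absorb a single Galois automorphism chosen via the CRT decomposition \eqref{eq:cyclotomic_Galois_group-2} so that all $\xi^{}_{p^{a_p}}$ are fixed, and then induct over the prime-power components using Lemmas~\ref{lem:preserving_cyclotomic_base} and~\ref{lem:preserving_cyclotomic_base_more_factors} to conclude that the remaining map fixes all roots of unity and hence a $\ZZ$-basis of $\cO_n$. The only difference is that you make explicit why $M_{\varepsilon^{-1}}$ and $\sigma^{-1}$ preserve $W^{}_{\nts k}$, a point the paper leaves implicit in its ``we may assume'' reductions.
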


\begin{proof}
  By Lemma~\ref{lem:preservation_of_roots_of_1}, we have
  $A (1) \in \cO^{\times}_{n}$. We may therefore assume that
  $A (1) = 1$.

  Suppose that $n=\prod_{p \mid n} p^{a_p}$, and let $\xi_{p^{a_p}}$
  be the corresponding primitive $p^{a_p}$-th roots of unity. Once
  again, Lemma~\ref{lem:preservation_of_roots_of_1} shows that
  $A \bigl( \xi_{p^{a_p}} \bigr) = \xi_{p^{a_p}}^{i_p}$ for some
  $i_p \in (\ZZ/ p^{a_p} \ZZ)^{\times}$ and every prime
  $p \ts | \ts n$. From the decomposition of the Galois group of $K$
  given by Eqs.~\eqref{eq:cyclotomic_Galois_group} and
  \eqref{eq:cyclotomic_Galois_group-2}, we see that $\Gal (K/\QQ)$ is
  isomorphic to the direct product of the
  $ \Gal \bigl( \QQ(\xi_{p^{a_p}}) /\QQ \bigr)$, so there is a unique
  Galois automorphism $\sigma$ that maps $ \xi^{}_{p^{a_p}}$ to
  $\xi_{p^{a_p}}^{i_p}$ for every prime $p \ts | \ts n$. Thus, via
  replacing $A$ by $\sigma^{-1}\circ A$, we can assume that
  $A \bigl( \xi_{p^{a_p}} \bigr) = \xi_{p^{a_p}}$ for every prime
  divisor $p$ of $n$.

  If we can prove that these additional assumptions on $A$ imply that
  it is the identity, we are done. As the roots of unity in $K$
  generate $ \cO_n$ as a $\ZZ$-module, it is enough to show that
  $A (\xi) = \xi$ for every root of unity in $K$. Take any of them and
  suppose that $\ord (\xi) = d$; we will proceed by induction on the
  number of primes dividing $d$.

  If $d = p^{a}$ (so $d$ has only one prime divisor), then
  $\xi = \xi_{p^{a_{p}}}^i$ for some $0 \leqslant i < p^{a_p}$. In
  this case, Lemma~\ref{lem:preserving_cyclotomic_base} already
  implies that $A (\xi) = \xi$.  Otherwise, choose some prime
  $p \mid d$ and suppose $\xi = \xi_{p^{a_p}}^i \xi'$, where
  $\ord (\xi')$ divides $\ord (\xi)$ and is not divisible by
  $p$. Then, $\xi' = \xi^{}_m$ where $m$ is some divisor of $n$ that
  is coprime with $p$, and, as $\ord (\xi')$ has one prime factor less
  than $\ord (\xi)$, the induction hypothesis shows that
  $A (\xi') = \xi'$. Likewise,
  Lemma~\ref{lem:preserving_cyclotomic_base} once again implies that
  $A (\xi_{p^a}^i) = \xi_{p^a}^i$. Hence, we may employ
  Lemma~\ref{lem:preserving_cyclotomic_base_more_factors} and conclude
  that $A (\xi) = A (\xi_{p^a}^i \xi_m) = \xi_{p^a}^i \xi_m = \xi$, as
  desired. By induction, this is true for every root of unity, and $A$
  is thus the identity.
\end{proof}

At this point, we can finally determine the stabiliser of $V_k$ as
follows.

\begin{theorem}\label{thm:stab}
  Let\/ $K=\QQ (\xi^{}_{n})$ be a cyclotomic field, with\/ ring of
  integers\/ $\cO_n = \ZZ[ \xi^{}_{n}]$, and let\/ $V_k$ be the set of
  all\/ $k$-free elements of\/ $\cO_n$.  If\/ $A$ is a\/ $\ZZ$-linear
  bijection of\/ $\cO_n$ with\/ $A (V_k) \subseteq V_k$, there exist a
  unit\/ $\varepsilon \in \cO^{\times}_{n}$ and a Galois
  automorphism\/ $\sigma$ of\/ $K$ such that\/
  $ A = M_{\varepsilon} \circ \sigma$. Thus, the stabiliser of the
  set\/ $V_k$ is the group
\[
  \stab (V^{}_k) \, = \, \cO^{\times}_{n} \nts\rtimes
  \Gal (K \nts\nts / \ts \QQ) \, = \,
  \cO^{\times}_{n} \nts \rtimes \Aut^{}_{\QQ} \bigl(\QQ (\xi^{}_{n})
  \bigr),
\]
where the groups appearing here are the ones described in
Section~$\ref{sec:prelim}$.
\end{theorem}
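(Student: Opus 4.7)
The proof is essentially a packaging of the preceding lemmas and propositions, so my plan is to assemble them in the right order and then verify the reverse inclusion and the semidirect product structure.

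First, I would argue the inclusion ``$\stab(V_k) \subseteq \cO^{\times}_{n} \rtimes \Gal(K/\QQ)$''. Let $A$ be a $\ZZ$-linear bijection of $\cO_n$ with $A(V_k) \subseteq V_k$. By Proposition~\ref{prop:AW=W}(2), this already forces $A(V_k)=V_k$, so $A \in \stab(V_k)$, and by part~(3) of the same proposition, $A^{-1}$ also lies in $\stab(V_k)$. Lemma~\ref{lem:coprimality_lemma} then yields $A(W_k) \subseteq W_k$; applied to $A^{-1}$, it gives the reverse inclusion, so $A(W_k)=W_k$. Proposition~\ref{prop:structure_of_A} now produces $\varepsilon \in \cO_n^{\times}$ and $\sigma \in \Gal(K/\QQ)$ with $A = M_{\varepsilon} \circ \sigma$.

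Next, I would verify the reverse inclusion: every map of the form $A = M_{\varepsilon} \circ \sigma$ with $\varepsilon \in \cO_n^{\times}$ and $\sigma \in \Gal(K/\QQ)$ is a $\ZZ$-linear bijection of $\cO_n$ preserving $V_k$. Linearity and bijectivity are immediate; for the preservation of $V_k$, one notes that $x \in V_k$ is an ideal-theoretic condition on the principal ideal $(x)$. Multiplication by a unit does not alter $(x)$, and a Galois automorphism $\sigma$ sends the prime factorisation $(x) = \prod_{\fp} \fp^{\,v_\fp(x)}$ to $(\sigma(x)) = \prod_{\fp} \sigma(\fp)^{\,v_\fp(x)}$, which is again $k$-free because $\sigma$ permutes the prime ideals of $\cO_n$. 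Hence $M_{\varepsilon} \circ \sigma \in \stab(V_k)$, completing the set-theoretic equality.

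Finally, I would identify the group structure. A direct computation gives
\[
   (M_{\varepsilon} \circ \sigma)\circ(M_{\varepsilon'} \circ \sigma')(x)
   \, = \, \varepsilon\, \sigma(\varepsilon')\,\sigma\sigma'(x)
   \, = \, \bigl(M_{\varepsilon\, \sigma(\varepsilon')} \circ \sigma\sigma'\bigr)(x),
\]
which is exactly the multiplication rule in the semidirect product $\cO_n^{\times} \rtimes \Gal(K/\QQ)$, where $\Gal(K/\QQ)$ acts on $\cO_n^{\times}$ by its natural action. Uniqueness of the decomposition $A = M_{\varepsilon} \circ \sigma$ follows from evaluating at $1$: the value $A(1) = \varepsilon$ determines $\varepsilon$, and then $\sigma = M_{\varepsilon^{-1}} \circ A$ is pinned down. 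This yields the claimed isomorphism $\stab(V_k) \simeq \cO_n^{\times} \rtimes \Gal(K/\QQ) = \cO_n^{\times} \rtimes \Aut_{\QQ}(\QQ(\xi_n))$.

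The real content sits in Proposition~\ref{prop:structure_of_A} and the machinery developed in Section~\ref{sec:stab}; once that is available, the theorem is obtained by bookkeeping, so I expect no genuine obstacle in this final assembly step beyond keeping the semidirect versus direct product distinction straight.
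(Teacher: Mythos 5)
Your proposal is correct and follows essentially the same route as the paper: reduce to $A(W_k)=W_k$ via Lemma~\ref{lem:coprimality_lemma} and Proposition~\ref{prop:AW=W}, invoke Proposition~\ref{prop:structure_of_A}, and verify the semidirect product multiplication rule. You are slightly more explicit than the paper in checking the reverse inclusion (that every $M_{\varepsilon}\circ\sigma$ preserves the ideal-theoretic $k$-freeness condition) and the uniqueness of the decomposition via $A(1)=\varepsilon$, both of which the paper leaves implicit; these additions are correct and harmless.
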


\begin{proof}
  Since $A(V_k) \subseteq V_k$, we have $A(W_k) = W_k$ by
  Lemma~\ref{lem:the-usual} and Proposition~\ref{prop:AW=W}.
  Consequently, we get $A = M_{\varepsilon} \circ \sigma$ with
  $\varepsilon \in \cO^{\times}_n$ and
  $\sigma \in \Gal (K \nts\nts / \ts \QQ)$ from
  Proposition~\ref{prop:structure_of_A}.
 
  Since every $\sigma \in \Gal (K \nts\nts / \ts \QQ)$ is a field
  automorphism, it is easy to check that, for any unit $\varepsilon$,
  one has
  $M_{\varepsilon} \circ \sigma = \sigma \circ
  M_{\sigma^{-1}(\varepsilon)}$ and
  $(M_{\varepsilon} \circ \sigma)^{-1} = \sigma^{-1} \circ
  M_{\varepsilon^{-1}} = M_{\sigma^{-1} (\varepsilon^{-1})} \circ
  \sigma^{-1}$. With this, one sees that $\stab (V_k)$ is indeed a
  group, and one has
 \[
      \sigma \circ M_{\varepsilon} \circ \sigma^{-1} \, = \, 
      M_{\sigma(\varepsilon)} \ts .
 \]
 This also implies
 $(M_{\varepsilon} \circ \sigma )\circ ( M_{\varepsilon'} \circ
 \sigma' ) = M_{\varepsilon\ts \sigma(\varepsilon')} \circ (\sigma
 \circ \sigma')$, which establishes the semi-direct product structure
 as claimed.
\end{proof}

\section{Extended symmetries and further
     invariants}\label{sec:groups}

The action of $\cG\defeq \ZZ^d$ on $\XX = \XX^{}_{V'}$ is faithful. 
We now  consider the groups
\[
    \cS (\XX) \, \defeq \, \cent^{}_{\Aut (\XX)} (\cG)
    \quad\text{and}\quad
    \cR (\XX) \, \defeq \, \norm^{}_{\Aut (\XX)} (\cG) \ts ,
\]
the (topological) centraliser and
normaliser of the shift space, as introduced earlier.  For $d>1$,
the normaliser contains interesting information,
in particular on non-trivial symmetries; see \cite{BRY} for details
and examples.

\begin{prop}\label{prop:trivial}
   Let\/ $k\geqslant 2$ be fixed, and let\/ $(\XX^{}_{\cB}, \ZZ^d)$
   be the\/ $\cB$-free system from 
   Proposition~\textnormal{\ref{prop:admissible}}. Then,
   the centraliser is the trivial one, $\cS (\XX^{}_{\cB})
   = \ZZ^d$, and the normaliser is of the form\/ $\cR (\XX^{}_{\cB})
   = \cS (\XX^{}_{\cB}) \rtimes \cH$, where\/ $\cH$ is isomorphic
   with a non-trivial subgroup of\/ $\GL(n,\ZZ)$.
\end{prop}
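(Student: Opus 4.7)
My plan is to split the proof according to the two parts of the claim, proving the triviality of the centraliser independently from the structural description of the normaliser. The first is an adaptation of the hereditary-$\cB$-free arguments of \cite{BBHLN,BBN,Keller}, while the second is essentially a topological-dynamics packaging of Theorem~\ref{thm:stab}. Note that $\cH$ should live in $\GL(d,\ZZ)$ with $d=\varphi(n)$, since $\cG = \ZZ^d$.

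For the centraliser, I would follow the strategy used in \cite{BBHLN} for the visible lattice points and in \cite{BBN} for the quadratic cyclotomic cases. The key structural input is that $\XX_k = \AAA_k$ is hereditary by Proposition~\ref{prop:admissible}, and that $V'_k$ realises the maximal upper density among all averaging sequences by Fact~\ref{fact:nat-dens}. Any $H \in \cS(\XX_\cB)$ commutes with $\cG$, so it preserves the Mirsky measure $\mu_{_\mathrm{M}}$ and maps its support to itself. The hereditary structure, combined with the weak-model-set description of the maximal-density configurations from \cite{BHS,Keller}, forces $H$ to act as a translation on a $\mu_{_\mathrm{M}}$-full-measure subset. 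Continuity of $H$ together with the fact that $\cG + V'_k$ is dense in $\XX_k$ then propagates the equality to all of $\XX_k$, giving $H \in \cG$.

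For the normaliser, I would set up the standard short exact sequence
\[
      1 \longrightarrow \cS(\XX_\cB) \longrightarrow \cR(\XX_\cB)
      \stackrel{\rho}{\longrightarrow} \GL(d,\ZZ) \ts ,
\]
where $\rho$ sends $H$ to the unique matrix $M_H$ satisfying $H G^{}_t H^{-1} = G^{}_{M_H t}$ for every shift $G^{}_t$ with $t\in\cG$. By part~(i), $\ker(\rho) = \cS(\XX_\cB) = \cG$, so $\cH \defeq \mathrm{im}(\rho)$ is a subgroup of $\GL(d,\ZZ)$ with $\cR(\XX_\cB)/\cG \simeq \cH$. To exhibit non-trivial elements of $\cH$ and simultaneously construct a splitting, I would invoke Theorem~\ref{thm:stab}: each $A \in \stab(V_k) \simeq \cO^{\times}_n \rtimes \Gal(K/\QQ)$ is, via the Cartesian embedding, a $\ZZ$-linear bijection of $\ZZ^d$ preserving $V'_k$, hence an element of $\GL(d,\ZZ)$ that lifts to a homeomorphism $\widetilde{A}$ of $\XX_k$ through $\widetilde{A}(\omega)^{}_z = \omega^{}_{A^{-1} z}$. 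A direct computation shows $\widetilde{A}\ts G^{}_t \ts \widetilde{A}^{-1} = G^{}_{A t}$, so $\widetilde{A} \in \cR(\XX_\cB)$ with $\rho(\widetilde{A}) = A$. The assignment $A \mapsto \widetilde{A}$ is a group homomorphism and a section of $\rho$ onto its image, which yields the splitting $\cR(\XX_\cB) = \cG \rtimes \cH$ and shows that $\cH \supseteq \stab(V_k)$ is non-trivial, since already the Galois group $\Gal(K/\QQ) \simeq (\ZZ/n\ZZ)^{\times}$ is non-trivial for every admissible $n > 2$.

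The hard part will be part~(i). Once $\cS(\XX_\cB) = \cG$ is in hand, the normaliser statement is an essentially formal reformulation of Theorem~\ref{thm:stab} within the topological-dynamical framework. However, ruling out non-shift commuting homeomorphisms for cyclotomic $\cO_n$ with general $d = \varphi(n)$ and infinite unit group $\cO^{\times}_n$ requires careful verification that the weak-model-set and hereditary-shift arguments used in the quadratic case of \cite{BBN} transfer without essential modification to the higher-rank setting. This is the technical hurdle where the adaptation work concentrates.
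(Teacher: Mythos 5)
Your overall strategy coincides with the paper's: the centraliser is handled by the Mentzen argument as extended to lattice systems in \cite{BBHLN} (the paper simply invokes \cite[Thm.~5.3]{BBHLN}), and the normaliser is obtained from the exact sequence $1 \to \cS \to \cR \to \GL(d,\ZZ)$ together with an affine section, with non-triviality witnessed by an explicit symmetry (the paper uses $-1 \in \cO^{\times}$, mapping to $-I$; your Galois witness works equally well). One caveat on the centraliser sketch: the mechanism you describe --- that any $H \in \cS(\XX_{\cB})$ ``preserves the Mirsky measure and maps its support to itself'' --- is not automatic ($H_{*}\mu_{_\mathrm{M}}$ is a priori just another ergodic invariant measure), and it is not how the Mentzen/\cite{BBHLN} argument actually runs; that argument is combinatorial, using the Curtis--Hedlund--Lyndon local rule of $H$ together with the hereditary/admissible structure of $\AAA_k$ and the positive-density locator sets. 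Since you explicitly defer to those references for the details, this is a presentational inaccuracy rather than a fatal flaw, but the measure-theoretic route as stated would need repair.

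The genuine gap is in the splitting of the normaliser. You define $\cH \defeq \mathrm{im}(\rho)$, but your section $A \mapsto \widetilde{A}$ is constructed only on $\stab(V_k)$, which you show satisfies $\stab(V_k) \subseteq \cH$. This yields $\cG \rtimes \stab(V_k) \leqslant \cR(\XX_{\cB})$, but it does \emph{not} yield $\cR(\XX_{\cB}) = \cG \rtimes \cH$ unless you also prove the reverse inclusion $\mathrm{im}(\rho) \subseteq \stab(V_k)$, i.e.\ that \emph{every} $H \in \cR(\XX_{\cB})$ is affine, of the form $G_t \circ \widetilde{M}$ with $M$ a linear bijection preserving $\XX_k$ (and hence, by maximality of $V'_k$ among admissible sets, preserving $V'_k$). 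This is exactly the step the paper supplies via the variant of the Curtis--Hedlund--Lyndon theorem from \cite{BRY,BBHLN}, and it is also the step where the triviality of the centraliser is actually used: once $\cS = \cG$, the composition of $H$ with a candidate linear lift lies in $\cG$, forcing affineness. Without this, the short exact sequence gives you $\cR/\cG \simeq \cH$ as an abstract isomorphism but no splitting, and no control on $\cH$ beyond it being a subgroup of $\GL(d,\ZZ)$ containing $\stab(V_k)$. (A minor point in your favour: you correctly note that the $\GL(n,\ZZ)$ in the statement should read $\GL(d,\ZZ)$ with $d = \varphi(n)$.)
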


\begin{proof}
  This is a consequence of \cite[Thm.~5.3]{BBHLN}. In fact, the
  triviality (or minimality) of the centraliser employs an argument
  put forward by Mentzen \cite{Mentzen} for the subshift of
  square-free integers, which was then extended to lattice systems in
  \cite{BBHLN}.
  
  With this structure of the centraliser, a variant of the 
  Curtis--Hedlund--Lyndon theorem
  can be used to prove that any element of $\cR (\XX^{}_{\cB})$ must
  be affine, which gives the semi-direct product structure as claimed;
  see \cite{BRY,BBHLN} for details and background.
  
  That $\cH$ must be non-trivial follows from the observation that
  the unit group $\cO^{\times}\!$ always at least contains 
  the elements $\pm 1$. Via the Minkowski embedding $\iota$,
  this maps to a non-trivial subgroup of $\Aut(\ZZ^d) = \GL(d,\ZZ)$.
\end{proof}

Finally, we can wrap up the consequence of the stabiliser structure
as follows.

\begin{theorem}
  Let\/ $(\XX^{}_{\cB},\ZZ^d )$ be the\/ $\cB$-free system from
  Proposition~\textnormal{\ref{prop:admissible}}, and let\/ $K$ be a
  quadratic or a cyclotomic number field. Then, the normaliser is\/
  $\cR = \cS \rtimes \cE$ with\/ $\cS = \ZZ^d$ and\/
  $\cE \simeq \cO^{\times} \! \rtimes \Aut^{}_{\QQ} (K)$, where\/
  $\Aut^{}_{\QQ} (K) = \Gal (K/\QQ)$.  \qed
\end{theorem}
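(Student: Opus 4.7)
The plan is to combine Proposition~\ref{prop:trivial} with the stabiliser calculations from Fact~\ref{fact:Gauss-Eisen} (quadratic cases) and Theorem~\ref{thm:stab} (cyclotomic cases). Proposition~\ref{prop:trivial} already supplies $\cS(\XX^{}_{\cB}) = \ZZ^d$ together with a semi-direct product decomposition $\cR(\XX^{}_{\cB}) = \cS(\XX^{}_{\cB}) \rtimes \cH$ for some non-trivial $\cH \subseteq \GL(d,\ZZ)$, so the remaining task is to identify $\cH$ explicitly with $\cE \simeq \cO^{\times} \rtimes \Aut^{}_{\QQ}(K)$.

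First, I would use the Curtis--Hedlund--Lyndon-type argument underlying Proposition~\ref{prop:trivial} to characterise $\cH$ as the group of those $A \in \GL(d,\ZZ)$ whose induced linear map $L^{}_{\nts A}\colon \cX \mapsto A(\cX)$ preserves the hull $\XX^{}_{\cB}$ setwise. Since $V'_k$ is a weak model set of maximal density by Fact~\ref{fact:nat-dens} and the analysis in \cite{BHS,TAO}, the orbit $\ZZ^d + V'_k$ consists precisely of those elements of $\XX^{}_{\cB}$ that attain the maximal density $1/\zeta^{}_{K}(k)$. As $A$ is a lattice automorphism, $A(V'_k)$ again has this maximal density, and the uniqueness of maximally dense configurations up to translation forces $A(V'_k) = V'_k + t$ for some $t \in \ZZ^d$. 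Absorbing this translation into $\cS$, we may choose a representative in each $\cS$-coset of $\cR$ whose linear part satisfies $A(V'_k) = V'_k$ exactly.

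Via the Cartesian embedding $\iota$ from Section~\ref{sec:prelim}, such an $A \in \GL(d,\ZZ)$ corresponds bijectively to a $\ZZ$-linear bijection $\tilde{A}$ of $\cO$ with $\tilde{A}(V_k) = V_k$, that is, to an element of $\stab(V_k)$; conversely, each element of $\stab(V_k)$ gives, via $\iota$, an element of $\GL(d,\ZZ)$ preserving $V'_k$ and hence normalising the $\ZZ^d$-action on $\XX^{}_{\cB}$. This establishes a group isomorphism $\cH \simeq \stab(V_k)$. Invoking Fact~\ref{fact:Gauss-Eisen} for $K \in \{\QQ(\ii), \QQ(\varrho)\}$ and Theorem~\ref{thm:stab} for all further cyclotomic fields then yields
\[
  \cH \, \simeq \, \stab(V^{}_k) \, = \, \cO^{\times} \rtimes \Aut^{}_{\QQ}(K)
  \, = \, \cE \ts ,
\]
as claimed.

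The main obstacle is the rigidity step, namely showing that $A(V'_k)$ necessarily lies in the $\ZZ^d$-orbit of $V'_k$ so that a genuine linear representative with $A(V'_k) = V'_k$ can be selected within each $\cS$-coset. This rests on the maximal density property of $V'_k$ together with the uniqueness of maximally dense configurations up to translation, both of which are furnished by the weak model set theory of \cite{BHS,TAO}. The remaining semi-direct product structure inside $\cE$ is routine and follows from the commutation relation $\sigma \circ M^{}_{\varepsilon} \circ \sigma^{-1} = M^{}_{\sigma(\varepsilon)}$ derived at the end of the proof of Theorem~\ref{thm:stab}.
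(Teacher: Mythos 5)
Your overall architecture is the intended one: Proposition~\ref{prop:trivial} reduces the theorem to identifying the group $\cH$ of linear parts with $\stab(V_k)$, after which Theorem~\ref{thm:stab} (and, for quadratic $K$, the results of \cite{BBN} rather than only Fact~\ref{fact:Gauss-Eisen}, which covers just the Gaussian and Eisenstein cases) finishes the job; the inclusion $\stab(V_k)\hookrightarrow\cH$ and the semi-direct product bookkeeping are fine. The genuine gap is in your rigidity step. The claim that the orbit $\ZZ^d+V'_k$ consists \emph{precisely} of the elements of $\XX^{}_{\cB}$ attaining the maximal density $1/\zeta^{}_{K}(k)$ is false. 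Since $\XX_k=\AAA_k$ is hereditary (Proposition~\ref{prop:admissible}), deleting any finite set from $V'_k$ already yields a maximal-density element outside the orbit; more seriously, for each $w$ in the compact group $H$ of the cut and project scheme \eqref{eq:CPS}, the configuration $\{z\in\ZZ^d : z^{\star}+w\in\vO\}$ lies in $\AAA_k$ and has density $1/\zeta^{}_{K}(k)$ for Haar-almost every $w$. This is an uncountable family of inclusion-maximal, maximal-density elements of $\XX_k$, of which the $\ZZ^d$-orbit of $V'_k$ is only a countable subset. So maximal density alone cannot force $A(V'_k)=V'_k+t$, and the weak model set theory of \cite{BHS} does not furnish the uniqueness you invoke.

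The step can be repaired, and this is what the underlying reference \cite[Thm.~5.3]{BBHLN} actually does: work with the identification $\XX_k=\AAA_k$ rather than with densities. A linear $A\in\GL(d,\ZZ)$ maps the admissibility space defined by $\cB=\{\vG_{\fp^k}:\fp\in\cP\}$ onto the one defined by $A\ts\cB=\{A\ts\vG_{\fp^k}:\fp\in\cP\}$, and two such spaces coincide only if the underlying collections of lattices do (using that no $\vG_{\fp^k}$ contains another, together with the tail estimates already needed for Proposition~\ref{prop:admissible}). Hence $A$ preserves $\XX_k$ if and only if it permutes the lattices $\vG_{\fp^k}$, which gives $A(V'_k)=\ZZ^d\setminus\bigcup_{\fp}A\ts\vG_{\fp^k}=V'_k$ exactly, with no translation left to absorb, and therefore $\cH\simeq\stab(V_k)$ as you want. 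With that substitution, your proof coincides with the paper's.
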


Let us close with some general comments.  It is an obvious and
interesting question to what extent this theorem generalises to other
number fields. It seems plausible that it should hold for $k$-free
number in arbitrary number fields that are Galois over $\QQ$, though
we do not see a sufficiently general mainly algebraic argument at
present. However, recent progress along a combined path via
  algebraic and analytic number theory looks promising \cite{Fabian},
  which complements our above derivation and extends it to general
  algebraic number fields.

As one possible extension of our results, let us mention that there is
no need to use the same power $k$ for all prime ideals. Indeed, if we
bring them in some order with non-decreasing norms, we can define an
integer $x\in\cO$ to be $\bs{\kappa}$-free, where
$\bs{\kappa} = (\kappa^{}_{1}, \kappa^{}_{2}, \ldots )$ with
$\kappa_i \in \NN$, if the principal ideal $(x)$ is not divisible by
$(\fp^{\kappa_i}_{i})$ for any $i\in\NN$, subject to some mild
restrictions on the $\kappa_i$ to ensure the Erd\H{o}s property. All
these cases will still be hereditary.

Above, we demonstrated how one can use known objects and properties
from algebraic number theory to construct and compute invariants of
certain number-theoretic dynamical systems. It is quite clear that
further invariants can and should be considered, starting from classic
characteristic quantities. Here, one could think of the number of
fundamental units with negative norm or, more generally, the
characteristic polynomial of the defining algebraic number in the
number field under consideration.

Likewise, objects such as class groups and other, more refined groups
should be helpful in analysing the possible factor relations among the
$k$-free dynamical systems. It would be nice to extend the partial
classification from \cite{BBN} on potential factors to a broader
setting. We would expect that factor relations (in the dynamical
sense) should be rare among these systems, due to the independence of
the underlying sets of primes.

\section*{Acknowledgements}

It is our pleasure to thank J\"{u}rgen Kl\"{u}ners for his cooperation
throughout the project. We thank Fabian Gundlach and an
anonymous referee for hints on the necessity of tail estimates, and
for various thoughtful suggestions that helped us to improve the
paper.  This work was supported by the German Research Council
(DFG), via Project A2 of the CRC TRR 358/1 (2023) -- 491392403
(Bielefeld -- Paderborn).  The second author received funding from
ANID/FONDECYT Postdoctorado 3230159 (year 2023).

\end{document}